\documentclass[11pt]{article}
\usepackage[margin=1.12in]{geometry}
\usepackage[T1]{fontenc}
\usepackage{lmodern}
\usepackage{microtype}
\usepackage{amsmath,amssymb,amsthm,mathtools}
\usepackage{aliascnt}
\usepackage[hidelinks]{hyperref}
\usepackage[nameinlink,noabbrev]{cleveref}

\hypersetup{
  pdftitle={Augmentation-Ideal Realization of Special Nottingham Congruence Series and Cartier Indecomposables},
  pdfauthor={Chao Ma},
  pdfcreator={Chao Ma},
  pdfsubject={Realization of the shifted congruence series of a special Nottingham group by the canonical weighted augmentation filtration, and the Cartier description of the Lie indecomposables of its group-symbol algebra},
  pdfkeywords={augmentation ideal; N-series; dimension subgroup; Nottingham jet group; volume-preserving automorphism; divergence-free polynomial field; Cartier operator; polynomial de Rham cohomology; Frobenius twist; group ring; filtered representation}
}

\numberwithin{equation}{section}
\newtheorem{theorem}{Theorem}[section]
\newaliascnt{maintheorem}{theorem}
\newtheorem{maintheorem}[maintheorem]{Main Theorem}
\aliascntresetthe{maintheorem}
\newaliascnt{proposition}{theorem}
\newtheorem{proposition}[proposition]{Proposition}
\aliascntresetthe{proposition}
\newaliascnt{lemma}{theorem}
\newtheorem{lemma}[lemma]{Lemma}
\aliascntresetthe{lemma}
\newaliascnt{corollary}{theorem}
\newtheorem{corollary}[corollary]{Corollary}
\aliascntresetthe{corollary}
\newaliascnt{definition}{theorem}
\newtheorem{definition}[definition]{Definition}
\aliascntresetthe{definition}
\newaliascnt{remark}{theorem}

\aliascntresetthe{remark}

\crefname{maintheorem}{Main Theorem}{Main Theorems}
\Crefname{maintheorem}{Main Theorem}{Main Theorems}
\crefname{theorem}{theorem}{theorems}
\Crefname{theorem}{Theorem}{Theorems}
\crefname{proposition}{proposition}{propositions}
\Crefname{proposition}{Proposition}{Propositions}
\crefname{lemma}{lemma}{lemmas}
\Crefname{lemma}{Lemma}{Lemmas}
\crefname{corollary}{corollary}{corollaries}
\Crefname{corollary}{Corollary}{Corollaries}
\crefname{definition}{definition}{definitions}
\Crefname{definition}{Definition}{Definitions}
\crefname{remark}{remark}{remarks}
\Crefname{remark}{Remark}{Remarks}

\DeclareMathOperator{\Sym}{Sym}
\DeclareMathOperator{\End}{End}
\DeclareMathOperator{\gr}{gr}
\DeclareMathOperator{\diver}{div}
\DeclareMathOperator{\Span}{span}

\newcommand{\m}{\mathfrak m}
\newcommand{\cI}{\mathcal I}
\newcommand{\cA}{\mathcal A}
\newcommand{\cE}{\mathcal E}
\newcommand{\cL}{\mathcal L}
\newcommand{\cX}{\mathcal X}
\newcommand{\Ld}{\mathsf L}
\newcommand{\Ex}{\mathsf E}
\newcommand{\vol}{\operatorname{vol}}
\newcommand{\one}{\mathbf 1}
\newcommand{\dR}{\mathrm{dR}}

\title{Augmentation-Ideal Realization of Special Nottingham\\
Congruence Series and Cartier Indecomposables}

\author{Chao Ma\thanks{Independent Researcher, London, United Kingdom.
Email: \href{mailto:raymond.ma@me.com}{\texttt{raymond.ma@me.com}}.
ORCID: \href{https://orcid.org/0009-0004-2456-9098}
{0009-0004-2456-9098}.}}
\date{}

\begin{document}
\maketitle

\begin{abstract}
The shifted congruence series of finite special Nottingham jet groups is
realized by an integral augmentation filtration.  For volume-preserving
formal substitutions in characteristic $p\ge5$ and $n\ge3$ variables, the
filtration induced by the action on truncated power series has the
congruence subgroups as its dimension subgroups.  The canonical weighted
augmentation filtration has the same dimension subgroups.  The group
symbols form a degree-truncated Lie algebra of divergence-free polynomial
fields.  Since brackets with quadratic fields give the subspace of exact
fields in every coefficient degree at least three, the Lie indecomposables occur only in degrees $(p-1)(n-1)+pr$, with
$r\ge0$, where they are determinant-weighted Frobenius twists of tensor
products of symmetric powers with the defining module.
\end{abstract}

\section{Introduction}

We realize the shifted congruence series of finite special Nottingham jet
groups by an integral augmentation filtration and determine the exceptional
graded Lie indecomposables of its group symbols.
Let $k=\mathbf F_p$, let
\[
 A=k[[x_1,\ldots,x_n]],\qquad \m=(x_1,\ldots,x_n),
\]
and fix the volume form
\[
 \vol=dx_1\wedge\cdots\wedge dx_n.
\]
Let $G$ be the group of continuous $k$-algebra automorphisms of $A$ that
are tangent to the identity and preserve $\vol$.  Tangency to the identity
means $g(x_j)-x_j\in\m^2$ for every $j$; accordingly we also write $G_2=G$.
For $s\ge2$, put
\[
 G_s=\{g\in G:g(x_j)-x_j\in\m^s\text{ for every }j\}.
\]
For a fixed $N\ge3$, write
\[
 T_N=G/G_N=G_2/G_N.
\]
It is a finite $p$-group: the congruence filtration has finitely many
finite-dimensional $k$-vector-space quotients $G_s/G_{s+1}$ (as established
below), and terminates at $G_N/G_N$.
We write $G_s/G_N$ for the image $G_sG_N/G_N$ in $T_N$; it is trivial
for $s\ge N$.  For $a\ge1$, put
\[
 H_a=G_{a+1}/G_N.
\]
Then
\[
 T_N=H_1\supseteq H_2\supseteq\cdots\supseteq H_{N-1}=1
\]
is an $N$-series:
\[
 [H_a,H_b]\subseteq H_{a+b}.
\]
The dimension-subgroup approach to filtered groups originates in work of
Magnus, Zassenhaus, Jennings, and Lazard
\cite{Magnus1937,Zassenhaus1939,Jennings1941,Lazard1953,Lazard1954}.
Quillen later identified the associated graded group algebra with an
enveloping algebra in characteristic $p$~\cite{Quillen1968}.  Losey
attributed to Lazard the realization problem for a prescribed
$N$-series~\cite{Losey1974}.  A general affirmative answer would imply the
classical dimension-subgroup conjecture, which Rips disproved with a finite
$2$-group counterexample~\cite{Rips1972}.  The congruence series considered here admits a canonical realization.  Hartl studied related group-ring
filtrations induced by $N$-series~\cite{Hartl1995}.

J\k{e}drzejewicz described divergence-free polynomial derivations in
positive characteristic~\cite{Jedrzejewicz2019}.  In the divided-power
Cartan-type setting, Benkart, Gregory, and Premet determined the homogeneous
divergence-free modules in natural degrees $\ell\le p-2$
\cite[Theorem~2.69]{BGP2009}, corresponding here to coefficient degrees
$d\le p-1$.  For polynomial fields over every field,
$[\Ld_2,\Ld_{d-1}]=\Ex_d$ for every $d\ge3$ \cite[Theorem~A]{MaDerived2026}.
Cartier's operator on polynomial differential forms then determines the
exceptional quotient \cite{Cartier1957}.

Congruence depth in $T_N$ is detected by its action on the truncated local
algebra
\[
 B_N=A/\m^N,
 \qquad F^qB_N=(\m^q+\m^N)/\m^N\quad(q\ge0).
\]
The action of $T_N$ on $B_N$ preserves the $\m$-adic filtration.  Let
$\cI_a\subseteq\Delta_{\mathbf Z}(T_N)$ be the pullback of the endomorphisms
that raise this filtration by at least $a$.  The substitution estimate yields
\[
 g\in G_{a+1}/G_N
 \quad\Longleftrightarrow\quad
 (g-1)(F^qB_N)\subseteq F^{q+a}B_N\quad\text{for all }q
\]
which determines the corresponding dimension subgroups.

The canonical weighted augmentation filtration of the series
$H_\bullet$ is
\[
 \cA_a(H_\bullet)=
 \Bigl\langle
 (h_1-1)\cdots(h_t-1):
 h_i\in H_{b_i},\ \sum_i b_i\ge a
 \Bigr\rangle_{\mathbf Z}.
\]
The operator filtration contains $\cA_a(H_\bullet)$ in every degree; the
canonical weighted filtration therefore has the same dimension subgroups.

\begin{maintheorem}[Augmentation realization]
\label{thm:augmentation-realization}
For every $N\ge3$, the canonical weighted filtration
\[
 \Delta_{\mathbf Z}(T_N)=\cA_1(H_\bullet)
 \supseteq\cA_2(H_\bullet)\supseteq\cdots
\]
is a multiplicative filtration by two-sided ideals and satisfies
\[
 \mathfrak D\bigl(\cA_a(H_\bullet)\bigr)=H_a=G_{a+1}/G_N
 \qquad(a\ge1).
\]
The jet action supplies a second multiplicative filtration
\[
 \Delta_{\mathbf Z}(T_N)=\cI_1\supseteq\cI_2\supseteq\cdots
\]
by two-sided ideals such that
\[
 \cA_a(H_\bullet)\subseteq\cI_a,
 \qquad
 \mathfrak D(\cI_a):=\{g\in T_N:g-1\in\cI_a\}=G_{a+1}/G_N
\]
for every $a\ge1$.
Write $\cI_a^N$ when the dependence on $N$ matters.  If $N'\ge N$, the
quotient homomorphism
$\mathbf Z[T_{N'}]\to\mathbf Z[T_N]$ sends $\cI_a^{N'}$ into
$\cI_a^N$.
\end{maintheorem}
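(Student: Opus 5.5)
We first construct the operator filtration $\cI_\bullet$ and then squeeze $\cA_\bullet(H_\bullet)$ between $H_\bullet$ and $\cI_\bullet$. Let $E=\End_{\mathbf Z}(B_N)$, where $B_N$ carries the $\m$-adic filtration $F^qB_N$. For $a\ge0$ put
\[
E_a=\{\phi\in E:\phi(F^qB_N)\subseteq F^{q+a}B_N\ \text{for all }q\}.
\]
Then $E_aE_b\subseteq E_{a+b}$, each $E_a$ is an additive subgroup, and $E_a=0$ once $a\ge N$. Since $T_N$ acts by filtration-preserving ring automorphisms, we get a ring homomorphism $\rho:\mathbf Z[T_N]\to E_0$. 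Set $\cI_a=\Delta_{\mathbf Z}(T_N)\cap\rho^{-1}(E_a)$ for $a\ge1$. These sets are two-sided ideals of $\mathbf Z[T_N]$ and satisfy $\cI_a\cI_b\subseteq\cI_{a+b}$, because $\rho$ is multiplicative and $\rho(\mathbf Z[T_N])\subseteq E_0$. Moreover $\cI_1=\Delta_{\mathbf Z}(T_N)$, since $(g-1)$ with $g$ tangent to the identity raises filtration by one. For functoriality in $N$, note that $B_{N'}\to B_N$ is a filtered surjection compatible with $T_{N'}\to T_N$. If $\phi$ raises filtration on $B_{N'}$ by $a$, the induced map on the quotient raises it by $a$ as well. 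Hence $\cI_a^{N'}$ maps into $\cI_a^N$.

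The central computation is the substitution estimate displayed before the statement: $g\in G_{a+1}/G_N$ if and only if $(g-1)F^q\subseteq F^{q+a}$ for all $q$.

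For the forward direction, write $g(x_j)=x_j+u_j$ with $u_j\in\m^{a+1}$. For a monomial $x^\alpha$ of degree $q$, expand $g(x^\alpha)-x^\alpha=\prod_j(x_j+u_j)^{\alpha_j}-x^\alpha$. Every term contains at least one factor $u_j$ in place of $x_j$, so it lies in $\m^{q-1+a+1}=\m^{q+a}$. Continuity passes this from monomials to $F^qB_N$.

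For the converse, take $q=1$: $(g-1)x_j=u_j\in F^{1+a}$. This gives $g(x_j)-x_j\in\m^{a+1}+\m^N$, so the class of $g$ lies in $G_{a+1}G_N/G_N$.

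Together these give $\mathfrak D(\cI_a)=\{g:\rho(g-1)\in E_a\}=G_{a+1}/G_N$. One small point must be checked: $g\in T_N$ is only a coset modulo $G_N$. This is harmless, because $G_N$ acts trivially on $B_N$ by the forward direction with $a=N-1$.

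For the canonical filtration, $\cA_a(H_\bullet)$ is the span of all products of weight at least $a$. It is closed under multiplication by products of total weight $b$, giving $\cA_a\cA_b\subseteq\cA_{a+b}$. It is closed under multiplication by $h-1$ for $h\in H_1$. Since $g=1+(g-1)$, it is also closed under multiplication by group elements. Hence each $\cA_a$ is a two-sided ideal, and $\cA_1$ equals $\Delta_{\mathbf Z}(T_N)$ because the elements $g-1$ span the augmentation ideal.

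If $h_i\in H_{b_i}$, the forward estimate gives $\rho(h_i-1)\in E_{b_i}$. Multiplicativity then gives $\rho\bigl(\prod_i(h_i-1)\bigr)\in E_{\sum b_i}\subseteq E_a$, so $\cA_a\subseteq\cI_a$. This yields the chain
\[
H_a\subseteq\mathfrak D(\cA_a)\subseteq\mathfrak D(\cI_a)=H_a .
\]
The first inclusion holds because $h\in H_a$ gives $h-1\in\cA_a$ as a product with $t=1$. The chain forces equality throughout.

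The expected obstacle is modest and sits in the converse direction of the substitution estimate. One must track the truncation at $\m^N$ and the identification $H_a=G_{a+1}G_N/G_N$ carefully. In particular, for $a\ge N-1$ both sides must be shown trivial: $E_a$ then kills $F^1$, so $g(x_j)\equiv x_j \bmod \m^N$, which forces $g\in G_N$. The volume-preserving condition plays no role in this argument. It is needed only for the finiteness of $T_N$, which the paper establishes elsewhere.
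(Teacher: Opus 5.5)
Your proposal is correct and follows the paper's proof essentially step for step: the ideals $\cI_a$ pulled back from the filtered jet action on $B_N$, the substitution estimate giving $\mathfrak D(\cI_a)=G_{a+1}/G_N$, the containment $\cA_a(H_\bullet)\subseteq\cI_a$ by multiplicativity, the squeeze $H_a\subseteq\mathfrak D(\cA_a(H_\bullet))\subseteq\mathfrak D(\cI_a)=H_a$, and functoriality through the filtered surjection $B_{N'}\to B_N$. Only your closing aside is off: finiteness of $T_N$ already follows from its faithful action on the finite set $B_N$ and is not used in the theorem, so volume preservation plays no role there either.
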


For $2\le s<N$, the symbols of elements of $G_s/G_N$ form a subspace
\[
 \cL_s\subseteq\cI_{s-1}/\cI_s.
\]
Place $\cL_s$ in graded degree $s-1$.  The multiplication commutator then
makes $\bigoplus_s\cL_s$ a graded Lie algebra.  Via leading fields, it is
identified with the similarly shifted truncation of
\[
 \Ld_s=\left\{\sum_i f_i\partial_i:
 f_i\in k[x_1,\ldots,x_n]_s,\ \sum_i\partial_i f_i=0\right\}.
\]
Let $\Ex_s\subseteq\Ld_s$ correspond, under contraction with $\vol$, to
the exact $(n-1)$-forms.

\begin{maintheorem}[Cartier augmentation quotient]
\label{thm:cartier-augmentation}
For every $3\le d<N$,
\[
 [\cL_2,\cL_{d-1}]=\cX_d,
\]
where $\cX_d\subseteq\cL_d$ corresponds to $\Ex_d$.  Hence
\[
 \cL_d/[\cL_2,\cL_{d-1}]\cong\Ld_d/\Ex_d.
\]
Equivalently, this is the degree-$d$ component of the Lie abelianization
\[
 Q(\cL_{<N})=\cL_{<N}/[\cL_{<N},\cL_{<N}].
\]
This quotient vanishes unless
\[
 d=d_r=(p-1)(n-1)+pr
\]
for an integer $r\ge0$.  If $d_r<N$, the leading-field identification gives
the quotient a rational $\mathrm{GL}(V)$-module structure and an isomorphism
\[
 \cL_{d_r}/[\cL_2,\cL_{d_r-1}]
 \cong
 \delta_1\otimes W_r^{(1)},
\]
where
\[
 V=k^n,\qquad
 W_r=\Sym^r(V^*)\otimes V,\qquad
 \delta_1=(\det V^*)^{\otimes(p-1)}.
\]
\end{maintheorem}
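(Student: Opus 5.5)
The proof runs in three stages: transfer the bracket identity from the group symbols to polynomial fields, then compute the quotient $\Ld_d/\Ex_d$ with Cartier's theory of polynomial de Rham cohomology.

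\emph{Transfer to fields.} By the preceding setup, the leading-field map identifies $\bigoplus_{2\le s<N}\cL_s$, with $\cL_s$ in graded degree $s-1$, with the truncation of $\bigoplus_s\Ld_s$. The multiplication commutator of symbols corresponds to the Lie bracket of leading fields. To check this, take $g\in G_s$ and $h\in G_t$ with leading fields $X\in\Ld_s$ and $Y\in\Ld_t$. On $B_N$ we have $g-1\equiv X$ modulo $\cI_{s}$ acting as a derivation raising degree by $s-1$, and similarly $h-1\equiv Y$. Then $gh-hg$ has image $XY-YX=[X,Y]$ in $\cI_{s+t-2}/\cI_{s+t-1}$. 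So $[\cL_2,\cL_{d-1}]$ corresponds to $[\Ld_2,\Ld_{d-1}]$ for $d<N$.

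\emph{Bracket identity.} By \cite[Theorem~A]{MaDerived2026}, $[\Ld_2,\Ld_{d-1}]=\Ex_d$ for every $d\ge3$. This gives $[\cL_2,\cL_{d-1}]=\cX_d$ and $\cL_d/[\cL_2,\cL_{d-1}]\cong\Ld_d/\Ex_d$. For the abelianization statement, the degree-$d$ part of $[\cL_{<N},\cL_{<N}]$ is $\sum_{a+b=d+1}[\cL_a,\cL_b]$. Each $[\Ld_a,\Ld_b]$ lies in $\Ex_{d}$, because for divergence-free fields $\iota_{[X,Y]}\vol=d(\iota_X\iota_Y\vol)$ by the Cartan formulas. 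The degree-$d$ part of the commutator therefore equals $\cX_d$. In degree $d=2$, $\Ld_2$ is generated by nothing below, so it lies entirely in $Q$; I expect the statement's degree range $3\le d$ to handle this.

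\emph{Cartier computation.} Contraction with $\vol$ identifies $\Ld_d$ with closed polynomial $(n-1)$-forms whose coefficients have degree $d$, and this identification is $\mathrm{GL}(V)$-equivariant up to a twist by $\det V$. Hence $\Ld_d/\Ex_d\cong H^{n-1}_{\dR}(k[x])$ in internal form-degree $d+n-1$. Cartier's isomorphism \cite{Cartier1957} gives $C^{-1}\colon\Omega^{n-1}_{k[x]}\xrightarrow{\sim}H^{n-1}_{\dR}$ Frobenius-semilinearly. It sends the monomial form $x^\alpha\,dx_I$ to $x^{p\alpha}(x_I)^{p-1}dx_I$.

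\emph{Degree bookkeeping.} An $(n-1)$-form $x^\alpha dx_{\hat j}$ has coefficient degree $|\alpha|=r$. Its image has coefficient degree $pr+(p-1)(n-1)$, which is exactly $d_r$. Every other degree therefore has zero quotient. In degree $d_r$ the quotient is $\Omega^{n-1}_r{}^{(1)}$. Here $\Omega^{n-1}$ with coefficients in $\Sym^r(V^*)$ is $\Sym^r(V^*)\otimes\Lambda^{n-1}V^*\cong\Sym^r(V^*)\otimes V\otimes\det V^*$. Frobenius-twisting gives $W_r^{(1)}\otimes(\det V^*)^{\otimes p}$. Untwisting the contraction by $\det V$ then yields $\delta_1\otimes W_r^{(1)}$ with $\delta_1=(\det V^*)^{\otimes(p-1)}$.

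The main obstacle is this final equivariance bookkeeping: the exact determinant power, and checking that Cartier's inverse is $\mathrm{GL}(V)$-equivariant on polynomial forms. I would verify both on the torus weights of a highest-weight monomial and on a single elementary matrix. The rational module structure comes from the linear $\mathrm{GL}(V)$ action on $k[x]$ preserving $\Ld_d$ and $\Ex_d$.
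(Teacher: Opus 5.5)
Your proposal is correct and follows essentially the paper's route. You transport through the graded Lie isomorphism \eqref{eq:graded-lie-isomorphism}, invoke \Cref{thm:quadratic-generation} for $[\Ld_2,\Ld_{d-1}]=\Ex_d$, and use $\iota_{[D,E]}\vol=d(\iota_D\iota_E\vol)$ from \Cref{lem:cartan-bracket-exact} to show the whole degree-$d$ commutator is $\cX_d$. You then read off the support and module structure from the inverse Cartier isomorphism, with the same determinant bookkeeping $(\det V^*)^{\otimes p}\otimes(\det V^*)^{-1}=\delta_1$.

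There are two local differences. You check bracket compatibility on operators, reading your congruence as $\rho(g-1)\equiv X$ modulo $\cE_s$; this suffices because $\cI_a/\cI_{a+1}\to\cE_a/\cE_{a+1}$ is injective by the pullback definition. The paper instead uses \Cref{lem:bracket-symbol} and \Cref{lem:leading-field}. Also, the equivariance you flag as the main obstacle needs no torus or elementary-matrix check: $C^{-1}$ is natural under ring maps and base change, as the paper makes explicit through \eqref{eq:linear-cartier-naturality}.
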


These quotients are stable under truncation in every fixed degree.

\section{Filtered representations and augmentation ideals}

Let $B$ be a finite-dimensional $k$-algebra with a finite decreasing
filtration
\[
 B=F^0B\supseteq F^1B\supseteq\cdots\supseteq F^MB=0,
\qquad
 F^iB\,F^jB\subseteq F^{i+j}B.
\]
We extend the filtration by $F^qB=0$ for $q\ge M$.
Let $\End_F(B)$ be the algebra of $k$-linear endomorphisms preserving this
filtration.

\begin{definition}[Operator filtration]
For $a\ge0$, set
\[
 \cE_a(B)=\{u\in\End_F(B):u(F^qB)\subseteq F^{q+a}B
 \text{ for all }q\ge0\}.
\]
\end{definition}

\begin{lemma}[Filtered endomorphism ideals]
\label{lem:endomorphism-ideals}
Each $\cE_a(B)$ is a two-sided ideal of $\End_F(B)$, and
\[
 \cE_{a+1}(B)\subseteq\cE_a(B),
 \qquad
 \cE_a(B)\cE_b(B)\subseteq\cE_{a+b}(B).
\]
\end{lemma}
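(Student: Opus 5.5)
The plan is to verify everything directly from the definition of $\cE_a(B)$, using only that each $F^qB$ is a subspace, that the filtration is decreasing, and the convention $F^qB=0$ for $q\ge M$. No earlier result is needed.

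\emph{Subspace and nesting.} First, $\cE_a(B)$ is a $k$-subspace of $\End_F(B)$: the condition $u(F^qB)\subseteq F^{q+a}B$ is preserved under sums and scalar multiples, because $F^{q+a}B$ is a subspace. Note also that $\cE_0(B)=\End_F(B)$ by definition. Nesting follows from $F^{q+a+1}B\subseteq F^{q+a}B$: if $u(F^qB)\subseteq F^{q+a+1}B$, then $u(F^qB)\subseteq F^{q+a}B$.

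\emph{Composition.} For $u\in\cE_a(B)$ and $v\in\cE_b(B)$ we apply the two conditions in turn:
\[
 uv(F^qB)\subseteq u(F^{q+b}B)\subseteq F^{q+b+a}B,
\]
so $uv\in\cE_{a+b}(B)$. This uses the defining condition for $u$ at index $q+b$. Since that condition is imposed for all $q\ge0$, including indices at or beyond $M$ where both sides are $0$, this step needs no separate treatment.

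\emph{Two-sided ideal.} This is the special case $b=0$ (or $a=0$) of the composition inclusion, since $\cE_0(B)=\End_F(B)$. It gives $\End_F(B)\,\cE_a(B)\subseteq\cE_a(B)$ and $\cE_a(B)\,\End_F(B)\subseteq\cE_a(B)$.

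There is no real obstacle here. The only point needing care is the bookkeeping at large indices, and the convention $F^qB=0$ for $q\ge M$ makes all inclusions there trivial. The multiplicativity $F^iBF^jB\subseteq F^{i+j}B$ of the filtration on $B$ is not used at all; it will only matter later, when multiplication operators and automorphisms are placed inside $\End_F(B)$.
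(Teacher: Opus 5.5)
Your proof is correct and follows the paper's argument: a direct check from the definition, with multiplicativity from applying the two conditions in turn and nesting from $F^{q+a+1}B\subseteq F^{q+a}B$; deriving the two-sided ideal property as the case $b=0$ via $\cE_0(B)=\End_F(B)$ is only a repackaging of the paper's separate check. You also state explicitly that $\cE_a(B)$ is a subspace, which the paper leaves implicit but which is needed for it to be an ideal.
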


\begin{proof}
If $u$ raises filtration by $a$ and $v$ preserves it, then both $vu$ and
$uv$ raise filtration by $a$.  If $u$ raises by $a$ and $w$ by $b$, then
$uw$ raises filtration by $a+b$.  Finally,
$F^{q+a+1}B\subseteq F^{q+a}B$ gives $\cE_{a+1}(B)\subseteq\cE_a(B)$.
\end{proof}

Let $\Gamma\le\operatorname{Aut}_{k\text{-alg}}(B)$ be a subgroup preserving
$F^\bullet B$.  We compose automorphisms as operators, so that $gh$ acts
as $g\circ h$, and use the convention
$[g,h]=g^{-1}h^{-1}gh$.  The action gives a ring homomorphism
\[
 \rho_{\mathbf Z}:\mathbf Z[\Gamma]\longrightarrow\End_F(B),
\]
where the integer coefficients act through $\mathbf Z\to k$.  Let
$\Delta_{\mathbf Z}(\Gamma)$ be the integral augmentation ideal.

\begin{definition}[Ideals defined by the action]
For $a\ge1$, define
\[
 \cI_a(\Gamma,B)=
 \Delta_{\mathbf Z}(\Gamma)\cap\rho_{\mathbf Z}^{-1}(\cE_a(B)).
\]
\end{definition}

\begin{proposition}[Pullback to the augmentation ideal]
\label{prop:operator-filtration}
Suppose that every $g\in\Gamma$ acts trivially on $\gr_F B$.  Then
\[
 \cI_1(\Gamma,B)=\Delta_{\mathbf Z}(\Gamma),
\]
each $\cI_a(\Gamma,B)$ is a two-sided ideal of
$\mathbf Z[\Gamma]$ contained in $\Delta_{\mathbf Z}(\Gamma)$, and
\[
 \cI_{a+1}(\Gamma,B)\subseteq\cI_a(\Gamma,B),
 \qquad
 \cI_a(\Gamma,B)\cI_b(\Gamma,B)
 \subseteq\cI_{a+b}(\Gamma,B).
\]
\end{proposition}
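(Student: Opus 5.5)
The plan is to reduce everything to \Cref{lem:endomorphism-ideals} by way of the ring homomorphism $\rho_{\mathbf Z}$. Two general facts do most of the work: preimages of two-sided ideals under a ring homomorphism are two-sided ideals, and intersections of two-sided ideals are two-sided ideals.

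\emph{Step 1: $\cI_1=\Delta_{\mathbf Z}(\Gamma)$.} The additive group $\Delta_{\mathbf Z}(\Gamma)$ is spanned by the elements $g-1$ with $g\in\Gamma$. I will check that $\rho_{\mathbf Z}(g-1)=g-\mathrm{id}_B$ lies in $\cE_1(B)$. The hypothesis says $g$ acts trivially on $\gr_F B$, so for $b\in F^qB$ we have $g(b)-b\in F^{q+1}B$. Since $\cE_1(B)$ is an abelian group, this gives $\rho_{\mathbf Z}(\Delta_{\mathbf Z}(\Gamma))\subseteq\cE_1(B)$, hence $\cI_1=\Delta_{\mathbf Z}(\Gamma)$. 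Some care is needed with indices $q\ge M$: there $F^qB=0$ and the condition is vacuous. Also $g-\mathrm{id}$ preserves the filtration because $g$ does, so it really lies in $\End_F(B)$.

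\emph{Step 2: ideals and monotonicity.} The map $\rho_{\mathbf Z}$ takes values in $\End_F(B)$ because $\Gamma$ preserves $F^\bullet B$. By \Cref{lem:endomorphism-ideals}, $\cE_a(B)$ is a two-sided ideal of $\End_F(B)$, so $\rho_{\mathbf Z}^{-1}(\cE_a(B))$ is a two-sided ideal of $\mathbf Z[\Gamma]$. Intersecting with the two-sided ideal $\Delta_{\mathbf Z}(\Gamma)$ gives a two-sided ideal contained in $\Delta_{\mathbf Z}(\Gamma)$. The inclusion $\cI_{a+1}\subseteq\cI_a$ follows from $\cE_{a+1}\subseteq\cE_a$.

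\emph{Step 3: multiplicativity.} Take $x\in\cI_a$ and $y\in\cI_b$. Then $xy\in\Delta_{\mathbf Z}(\Gamma)$, and $\rho_{\mathbf Z}(xy)=\rho_{\mathbf Z}(x)\rho_{\mathbf Z}(y)\in\cE_a\cE_b\subseteq\cE_{a+b}$ by \Cref{lem:endomorphism-ideals}. Hence $xy\in\cI_{a+b}$, and since $\cI_{a+b}$ is an additive group, sums of such products lie there too.

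There is no real obstacle here. The only point needing attention is Step 1: "acts trivially on $\gr_F B$" must be read as $(g-1)F^q\subseteq F^{q+1}$ for every $q$, and $\Delta_{\mathbf Z}$ must be generated additively by the elements $g-1$.
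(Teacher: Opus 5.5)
Your proposal is correct and follows essentially the same route as the paper: the spanning elements $g-1$ land in $\cE_1(B)$, the ideal property comes from $\cE_a(B)$ being a two-sided ideal of $\End_F(B)$, and monotonicity and multiplicativity are pulled back from \Cref{lem:endomorphism-ideals}. The only cosmetic difference is that the paper checks $xy,yx\in\cI_a$ directly for $x\in\cI_a$, $y\in\mathbf Z[\Gamma]$, instead of citing the general fact that the preimage of an ideal, intersected with the ideal $\Delta_{\mathbf Z}(\Gamma)$, is an ideal.
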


\begin{proof}
Write an augmentation element as a finite sum
\[
 z=\sum_g n_g(g-1).
\]
Because $g$ acts trivially on $\gr_F B$, each operator $g-1$ raises
filtration by at least one.  Thus $\rho_{\mathbf Z}(z)\in\cE_1(B)$ and
$\cI_1=\Delta_{\mathbf Z}(\Gamma)$.

Let $x\in\cI_a$ and $y\in\mathbf Z[\Gamma]$.  Since $\cE_a(B)$ is a
two-sided ideal of $\End_F(B)$, both $\rho(yx)$ and $\rho(xy)$ lie in
$\cE_a(B)$.  Since $x$ has augmentation zero, so do $yx$ and $xy$.  Hence
$yx,xy\in\cI_a$, and $\cI_a$ is a two-sided ideal of the full group ring.
The descending and multiplicative inclusions are inherited from the operator
filtration.
\end{proof}

For an ideal $J\subseteq\Delta_{\mathbf Z}(\Gamma)$, define its dimension
subgroup by
\[
 \mathfrak D(J)=\{g\in\Gamma:g-1\in J\}.
\]

\begin{theorem}[Dimension subgroups from a filtered action]
\label{thm:operator-realization}
Under the hypotheses of \Cref{prop:operator-filtration}, set
\[
 \Gamma_a=\{g\in\Gamma:(g-1)(F^qB)\subseteq F^{q+a}B
 \text{ for all }q\}.
\]
Then
\[
 \mathfrak D(\cI_a(\Gamma,B))=\Gamma_a
\]
for every $a\ge1$.
\end{theorem}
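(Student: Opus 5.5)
The statement is essentially an unwinding of definitions, so I will prove the equality $\mathfrak D(\cI_a(\Gamma,B))=\Gamma_a$ by checking the two inclusions directly.

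First I fix $g\in\Gamma$ and note that $g-1$ always has augmentation zero, so $g-1\in\Delta_{\mathbf Z}(\Gamma)$. Membership $g-1\in\cI_a(\Gamma,B)$ is therefore equivalent to $\rho_{\mathbf Z}(g-1)\in\cE_a(B)$. Since $\rho_{\mathbf Z}$ is a ring homomorphism sending $g$ to its action and $1$ to $\mathrm{id}_B$, we have $\rho_{\mathbf Z}(g-1)=g-\mathrm{id}_B$ as an operator on $B$.

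Next I must check that $g-\mathrm{id}_B$ lies in $\End_F(B)$. This holds because both $g$ and $\mathrm{id}_B$ preserve $F^\bullet B$: by hypothesis $\Gamma$ preserves the filtration, and the identity trivially does. Hence $\rho_{\mathbf Z}(g-1)\in\cE_a(B)$ holds precisely when $(g-1)(F^qB)\subseteq F^{q+a}B$ for all $q\ge0$, which is the defining condition of $\Gamma_a$. This gives both inclusions simultaneously.

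The only point needing care, which is mild, is to confirm the conventions match. The operator filtration $\cE_a$ ranges over $q\ge0$ with $F^qB=0$ for $q\ge M$, and $\Gamma_a$ uses the same range and extension, so no boundary cases differ. The hypothesis that $\Gamma$ acts trivially on $\gr_F B$ is not needed for the equality itself. It only guarantees, via \Cref{prop:operator-filtration}, that the $\cI_a$ form the multiplicative filtration of ideals, so that $\Gamma_1=\Gamma$ and the $\Gamma_a$ are the dimension subgroups of a genuine filtration. I could also remark that $\Gamma_a$ is a subgroup, but that follows from $\mathfrak D$ of an ideal being a subgroup, via $gh-1=(g-1)(h-1)+(g-1)+(h-1)$ and $g^{-1}-1=-g^{-1}(g-1)$, and is not required for the statement.
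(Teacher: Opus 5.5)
Your proposal is correct and follows the same route as the paper, which proves the equality by the one-line chain $g-1\in\cI_a \Leftrightarrow \rho(g)-1\in\cE_a(B) \Leftrightarrow g\in\Gamma_a$. You also spell out two points the paper leaves implicit: that $g-1$ automatically lies in $\Delta_{\mathbf Z}(\Gamma)$, and that $\rho(g)-1$ lies in $\End_F(B)$.
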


\begin{proof}
For $g\in\Gamma$,
\[
 g-1\in\cI_a
 \quad\Longleftrightarrow\quad
 \rho(g)-1\in\cE_a(B)
 \quad\Longleftrightarrow\quad
 g\in\Gamma_a.
\]
\end{proof}

\section{Special Nottingham jet groups}

Fix $p\ge5$ and $n\ge3$.  Put
\[
 A=k[[x_1,\ldots,x_n]],\qquad
 B_N=A/\m^N,
\qquad F^qB_N=(\m^q+\m^N)/\m^N\quad(q\ge0).
\]
Every element of $T_N$ preserves $F^\bullet B_N$ and acts trivially on
$\gr_F B_N$.
For $g\in G$, write $g^*$ for the induced operator on $A$; thus
$(gh)^*=g^*h^*$.

\begin{lemma}[Substitution estimate]
\label{lem:substitution-estimate}
Let $g\in G_s$ with $s\ge2$.  Then for every $q\ge0$,
\[
 (g^*-1)(\m^q)\subseteq\m^{q+s-1}.
\]
Conversely, if
\[
 (g^*-1)(\m)\subseteq\m^s,
\]
then $g\in G_s$.
\end{lemma}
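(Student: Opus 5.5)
The plan is to work directly with the definition of $g^*$ as a continuous $k$-algebra automorphism. Write $g^*(x_j)=x_j+h_j$ with $h_j\in\m^s$.

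For the forward direction, the first step is the key identity
\[
 g^*(uv)-uv=(g^*u-u)\,g^*v+u\,(g^*v-v),
\]
which says that $g^*-1$ is a twisted derivation. Together with $g^*(\m^q)\subseteq\m^q$ (since $g^*(x_j)\in\m$), this identity shows that the set
\[
 S=\{u\in A:(g^*-1)(u\,\m^q)\subseteq\m^{q+s-1}\text{ for all }q\ge0\}
\]
behaves well under products. I will instead argue by induction on $q$, in the following order.

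For $q=0$: $(g^*-1)(A)\subseteq\m^{s-1}$ holds because $g^*(1)=1$ and $(g^*-1)(\m)\subseteq\m^s\subseteq\m^{s-1}$. For an arbitrary power series $f=c+f_1$ with $f_1\in\m$, the base case then reduces to the case $q=1$.

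For $q=1$: I show $(g^*-1)(\m)\subseteq\m^s$. Any $f\in\m$ can be written as $f=\sum_j x_j u_j$ with $u_j\in A$. The identity gives
\[
 (g^*-1)(x_ju_j)=h_j\,g^*(u_j)+x_j(g^*-1)(u_j).
\]
Here $h_j\,g^*(u_j)\in\m^s$, and $x_j(g^*-1)(u_j)\in\m\cdot\m^{s-1}=\m^s$ by the $q=0$ case.

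This looks circular, because $q=0$ uses $q=1$ and conversely. I break the circularity by proving directly that $(g^*-1)(f)\in\m^{s-1}$ for all $f\in A$ using $\m$-adic approximation. For a monomial $x^\alpha$, $g^*(x^\alpha)=\prod_j(x_j+h_j)^{\alpha_j}$, and every term of its expansion other than $x^\alpha$ contains some $h_j$. So $(g^*-1)(x^\alpha)\in\m^{|\alpha|-1+s}$, which is exactly the claim for monomials with $q=|\alpha|$. The sharper bound $|\alpha|-1+s$ follows because replacing one factor $x_j$ by $h_j$ raises degree by at least $s-1$. Continuity of $g^*$ and closedness of $\m^{q+s-1}$ then pass this to arbitrary $f\in\m^q=\overline{\Span}\{x^\alpha:|\alpha|\ge q\}$: write $f$ as a convergent sum of monomials of degree at least $q$, and note that $g^*-1$ is continuous and linear.

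So I will present the monomial expansion as the main argument and skip the induction. The only step needing care is justifying the passage to infinite sums. This is routine, since $(g^*-1)$ maps $\m^{M}$ into $\m^{M}$, so partial sums converge $\m$-adically.

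For the converse, $(g^*-1)(\m)\subseteq\m^s$ applied to $x_j\in\m$ gives $g^*(x_j)-x_j\in\m^s$ for every $j$. This is exactly the definition of $g\in G_s$; we are already given that $g\in G$, so it is tangent and volume-preserving. There is one conventional point to check: $g(x_j)$ in the definition of $G_s$ means $g^*(x_j)$, so the two conditions agree. I expect no real obstacle here; the main care is in the bookkeeping of degrees in the forward direction.
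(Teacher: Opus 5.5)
Your final argument is correct and matches the paper's proof. You expand $\prod_j(x_j+h_j)^{\alpha_j}$ so that every non-leading term contains some $h_j$ and lies in $\m^{|\alpha|+s-1}$. You then pass to all of $\m^q$ by linearity, continuity and closedness of $\m^{q+s-1}$, and get the converse by evaluating on the coordinates $x_j$. The abandoned induction sketch can simply be deleted, since the monomial expansion already handles $q=0$ directly.
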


\begin{proof}
Write
\[
 g^*(x_i)=x_i+h_i,\qquad h_i\in\m^s.
\]
For a monomial $x^\alpha$ of degree $q$, expand
\[
 g^*(x^\alpha)-x^\alpha
 =\prod_i(x_i+h_i)^{\alpha_i}-\prod_i x_i^{\alpha_i}.
\]
Every term in the difference contains some $h_i$ in place of an $x_i$ and
therefore has degree at least $q-1+s$.  Linearity and continuity give the
forward inclusion.  The converse follows by applying the assumption to the
coordinates $x_i\in\m$.
\end{proof}

\begin{proposition}[Congruence depth from the jet action]
\label{prop:operator-congruence}
For $a\ge1$ and $g\in T_N$,
\[
 (g^*-1)(F^qB_N)\subseteq F^{q+a}B_N\quad\text{for all }q
\]
if and only if $g\in G_{a+1}/G_N$.
\end{proposition}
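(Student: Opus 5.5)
The plan is to reduce both directions to \Cref{lem:substitution-estimate}, applied to a lift $\tilde g\in G$ of $g\in T_N$. Throughout we use that the operator $g^*$ on $B_N$ is induced from $\tilde g^*$ on $A$, and that $F^qB_N$ is the image of $\m^q$. Well-definedness of the action is the first point to record. If $h\in G_N$, then the lemma with $s=N$ gives $(h^*-1)(\m^q)\subseteq\m^{q+N-1}\subseteq\m^N$. Hence $G_N$ acts trivially on $B_N$, and $g^*$ does not depend on the choice of lift.

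For the forward direction, suppose $g\in G_{a+1}/G_N$ and choose the lift $\tilde g\in G_{a+1}$. Applying \Cref{lem:substitution-estimate} with $s=a+1\ge2$ gives $(\tilde g^*-1)(\m^q)\subseteq\m^{q+a}$. Reducing modulo $\m^N$ gives $(g^*-1)(F^qB_N)\subseteq F^{q+a}B_N$ for every $q$. For $q+a\ge N$ both sides are $0$ and the inclusion is trivial.

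For the converse, take $q=1$ in the hypothesis. This gives $\tilde g^*(x_j)-x_j\in\m^{1+a}+\m^N$ for every $j$. There are two cases.

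\emph{Case $a+1\le N$.} Here $\m^{1+a}+\m^N=\m^{a+1}$. So $(\tilde g^*-1)(\m)\subseteq\m^{a+1}$; here it suffices to check the generators $x_j$, since $\tilde g^*-1$ is linear and sends $\m^2$ into $\m^2$ by the forward estimate with $s=2$. The converse half of \Cref{lem:substitution-estimate} then gives $\tilde g\in G_{a+1}$, so $g\in G_{a+1}/G_N$.

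\emph{Case $a+1>N$.} The hypothesis only yields $\tilde g\in G_N$. In this case $G_{a+1}/G_N=G_{a+1}G_N/G_N$ is trivial, as stated in the introduction. Since $\tilde g\in G_N$, $g$ is the identity and so lies in this trivial subgroup.

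The main subtlety is this bookkeeping at the truncation boundary: identifying $G_{a+1}/G_N$ with the image $G_{a+1}G_N/G_N$, and keeping track of the fact that a lift is only determined modulo $G_N$. Beyond that, the argument is direct.
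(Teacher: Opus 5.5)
Your proof is correct and follows the paper's route: the forward direction applies the substitution estimate to a lift in $G_{a+1}$ and reduces modulo $\m^N$, and the converse specializes to $q=1$ and splits according to whether $a+1$ reaches $N$. One slip, which does not affect the conclusion: the parenthetical reduction to generators is mis-justified, since knowing only that $\tilde g^*-1$ sends $\m^2$ into $\m^2$ would not give $(\tilde g^*-1)(\m)\subseteq\m^{a+1}$; it is also unnecessary, because the case $q=1$ of the hypothesis already covers all of $\m$ and membership in $G_{a+1}$ is defined by the values on the $x_j$ alone.
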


\begin{proof}
Choose a representative $\widetilde g\in G$ of $g$.  Suppose first that the
operator condition holds.  If $a+1<N$, its case $q=1$ gives
\[
 \widetilde g^*(x_i)-x_i\in\m^{a+1}+\m^N=\m^{a+1}
 \qquad(1\le i\le n),
\]
so $\widetilde g\in G_{a+1}$.  If $a+1\ge N$, then
$F^{a+1}B_N=0$, and the same condition shows that $g$ fixes every
coordinate of $B_N$.  Hence $\widetilde g\in G_N$ and $g=1$.

Conversely, a representative in $G_{a+1}$ satisfies the required
inclusions by \Cref{lem:substitution-estimate}, after reduction modulo
$\m^N$.
\end{proof}

\begin{lemma}[Basic properties of the congruence filtration]
\label{lem:basic-depth-estimates}
For every $s\ge2$, the subgroup $G_s$ is normal in $G$.  The kernel of the
action of $G$ on $B_N=A/\m^N$ is exactly $G_N$, so the induced action of
$T_N=G/G_N$ is faithful.  Moreover, for $a,b\ge2$,
\[
 [G_a,G_b]\subseteq G_{a+b-1}.
\]
\end{lemma}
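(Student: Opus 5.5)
The plan is to use \Cref{lem:substitution-estimate} for all three claims, together with the fact that an automorphism of $A$ is determined by the images of the coordinates.

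\emph{Normality.} The plan is to show that $G_s$ is the kernel of a homomorphism. Define
\[
\pi_s:G\to\operatorname{Aut}(A/\m^s),
\]
which is well defined because every $g\in G$ preserves $\m$, hence every power $\m^s$. An element $g$ lies in $\ker\pi_s$ exactly when $g^*$ acts trivially on $A/\m^s$. If $g\in G_s$, then \Cref{lem:substitution-estimate} gives $(g^*-1)(\m^q)\subseteq\m^{q+s-1}\subseteq\m^s$ for $q\ge1$, while constants are fixed; so $g\in\ker\pi_s$. Conversely, if $g\in\ker\pi_s$, then $g^*(x_j)-x_j\in\m^s$ for every $j$, which is the definition of $g\in G_s$. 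Thus $G_s=\ker\pi_s$ is normal. Taking $s=N$ gives the same identification for the action on $B_N$: the kernel of $G\to\operatorname{Aut}(B_N)$ is $G_N$. Hence $T_N$ acts faithfully.

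\emph{Commutators.} Take $g\in G_a$ and $h\in G_b$, and set $u=g^*-1$ and $v=h^*-1$. By \Cref{lem:substitution-estimate}, $u$ raises $\m$-adic order by $a-1$ and $v$ raises it by $b-1$. The converse part of that lemma reduces the claim to proving
\[
([g,h]^*-1)(\m)\subseteq\m^{a+b-1}.
\]
Because the composition convention gives $(gh)^*=g^*h^*$, we have $[g,h]^*=(g^*)^{-1}(h^*)^{-1}g^*h^*$. Then
\[
[g,h]^*-1=(g^*)^{-1}(h^*)^{-1}\bigl(g^*h^*-h^*g^*\bigr)=(g^*)^{-1}(h^*)^{-1}(uv-vu).
\]
The operator $uv-vu$ raises order by $(a-1)+(b-1)=a+b-2$. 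The inverses $(g^*)^{-1}$ and $(h^*)^{-1}$ preserve each $\m^q$, since $g^{-1}$ and $h^{-1}$ are again automorphisms preserving $\m$. Therefore $([g,h]^*-1)(\m)\subseteq\m^{1+a+b-2}=\m^{a+b-1}$, and so $[g,h]\in G_{a+b-1}$.

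\emph{Expected obstacle.} The only delicate point is that $u$ and $v$ act on the power series ring $A$ rather than on a finite quotient. The inclusions must therefore be justified on all of $\m^q$, not only on polynomials. I would handle this with the continuity already invoked in \Cref{lem:substitution-estimate}, or avoid it by working in $A/\m^{M}$ for every large $M$ and noting that membership in $\m^{c}$ is detected modulo $\m^{M}$ for $M>c$. The composition-order conventions also need care, but since the final estimate is symmetric in $u$ and $v$, a mismatch in convention would not affect the conclusion.
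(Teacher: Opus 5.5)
Your proof is correct and follows the paper's argument: the commutator estimate uses the same identity $[g,h]^*-1=(g^*)^{-1}(h^*)^{-1}(uv-vu)$, and the kernel statement is the same coordinate check. The paper runs the commutator estimate on $B_M$ for every $M$, which is the alternative you mention. The only real difference is in normality: you identify $G_s$ as the kernel of the reduction homomorphism $G\to\operatorname{Aut}(A/\m^s)$, whereas the paper conjugates the filtration-raising operator $h^*-1$ by $g^*$ and then applies the converse of the substitution estimate. Your version has the small advantage of giving normality and the faithfulness of the $T_N$-action in a single argument.
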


\begin{proof}
Every formal automorphism with invertible linear part preserves each
power $\m^q$.  If $h\in G_s$, then by \Cref{lem:substitution-estimate} the
operator $h^*-1$ raises the $\m$-adic filtration by at least $s-1$.
Conjugating this operator by $g^*$ and $(g^{-1})^*$, which preserve the
filtration, does not change that lower bound.  The converse part of
\Cref{lem:substitution-estimate} therefore gives
$ghg^{-1}\in G_s$, proving normality.

An element $g\in G$ acts trivially on $B_N$ exactly when
$g^*(x_i)-x_i\in\m^N$ for every $i$, that is, exactly when $g\in G_N$.
Hence the induced action of $T_N$ is faithful.

For the commutator estimate, write on $B_M$, for arbitrary $M$,
\[
 g^*=1+u,\qquad h^*=1+v,
\]
where $u$ raises filtration by $a-1$ and $v$ by $b-1$.  The exact identity
\[
 [g^*,h^*]-1=(g^*)^{-1}(h^*)^{-1}(uv-vu)
\]
shows that $[g^*,h^*]-1$ raises filtration by at least
$(a-1)+(b-1)=a+b-2$.  Since $M$ is arbitrary, the converse substitution
estimate gives $[g,h]\in G_{a+b-1}$.
\end{proof}

\begin{proof}[Proof of \Cref{thm:augmentation-realization}]
By \Cref{lem:basic-depth-estimates}, the action of $T_N$ on $B_N$ is
faithful.  Apply \Cref{prop:operator-filtration,thm:operator-realization}
to this action, and define
\[
 \cI_a=
 \Delta_{\mathbf Z}(T_N)\cap
 \rho_{\mathbf Z}^{-1}(\cE_a(B_N)).
\]
Then \Cref{prop:operator-congruence} gives
\[
 \mathfrak D(\cI_a)=G_{a+1}/G_N.
\]
The groups $\cA_a(H_\bullet)$ form a multiplicative filtration and
$\cA_1(H_\bullet)=\Delta_{\mathbf Z}(T_N)$.  They are two-sided ideals:
if $P$ is a weighted generator of $\cA_a(H_\bullet)$ and $g\in T_N=H_1$,
then
\[
 gP=P+(g-1)P,
 \qquad
 Pg=P+P(g-1),
\]
and the additional terms have weight at least $a+1$.

For $h\in H_b$, the equality $\mathfrak D(\cI_b)=H_b$ gives
$h-1\in\cI_b$.  Multiplicativity puts each generator
\[
 (h_1-1)\cdots(h_t-1),
 \qquad h_i\in H_{b_i},\quad \sum_i b_i\ge a,
\]
of $\cA_a(H_\bullet)$ in $\cI_a$.  Hence
\[
 \cA_a(H_\bullet)\subseteq\cI_a.
\]
For every $h\in H_a$, the element $h-1$ is a weight-$a$ generator of
$\cA_a(H_\bullet)$.  Hence
\[
 H_a\subseteq \mathfrak D\bigl(\cA_a(H_\bullet)\bigr)
 \subseteq \mathfrak D(\cI_a)=H_a,
\]
which proves the canonical realization assertion.

For $N'\ge N$, let
\[
 \pi_*:\mathbf Z[T_{N'}]\longrightarrow\mathbf Z[T_N]
\]
be induced by truncation.  If $z\in\cI_a^{N'}$, the operator induced by $\pi_*(z)$ on $B_N$ is the
quotient of the operator induced by $z$ on $B_{N'}$; it therefore still
raises the $\m$-adic filtration by at least $a$.  Hence
\[
 \pi_*(\cI_a^{N'})\subseteq\cI_a^N.
\]
\end{proof}

\begin{corollary}[The congruence $N$-series]
The sequence $H_a=G_{a+1}/G_N$ satisfies
\[
 [H_a,H_b]\subseteq H_{a+b}.
\]
\end{corollary}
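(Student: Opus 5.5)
The inclusion $[H_a,H_b]\subseteq H_{a+b}$ comes from the commutator estimate in \Cref{lem:basic-depth-estimates}, transported to the quotient $T_N$. With $H_a=G_{a+1}/G_N$ and $H_b=G_{b+1}/G_N$, take $a,b\ge1$ and $x\in H_a$, $y\in H_b$. Choose representatives $\widetilde x\in G_{a+1}$ and $\widetilde y\in G_{b+1}$; this is possible because $H_a$ is by definition the image $G_{a+1}G_N/G_N$. Both indices $a+1$ and $b+1$ are at least $2$, so \Cref{lem:basic-depth-estimates} applies and gives
\[
 [\widetilde x,\widetilde y]\in G_{(a+1)+(b+1)-1}=G_{a+b+1}.
\]
Since the quotient map $G\to T_N$ is a homomorphism, $[x,y]$ is the image of $[\widetilde x,\widetilde y]$, and therefore lies in $G_{a+b+1}G_N/G_N=H_{a+b}$. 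If $a+b+1\ge N$, the image is trivial, consistent with the convention that $H_c=1$ for $c\ge N-1$.

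To pass from commutators of elements to the commutator subgroup $[H_a,H_b]$, note that $H_{a+b}$ is a subgroup, and it is normal in $T_N$ because $G_{a+b+1}$ is normal in $G$ by the same lemma. So $H_{a+b}$ contains the subgroup generated by all such commutators $[x,y]$, which is $[H_a,H_b]$.

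There is a second route through \Cref{thm:augmentation-realization}. For $x\in H_a$ and $y\in H_b$, write $x-1=u\in\cI_a$ and $y-1=v\in\cI_b$. Then
\[
 [x,y]-1=x^{-1}y^{-1}(uv-vu)\in\cI_{a+b},
\]
using multiplicativity of the filtration and the two-sided ideal property. Hence $[x,y]\in\mathfrak D(\cI_{a+b})=H_{a+b}$.

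No step here is a real obstacle. The only points needing care are choosing representatives of the right depth, which the definition of the image guarantees, and the index shift $(a+1)+(b+1)-1=a+b+1$.
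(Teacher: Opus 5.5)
Your main argument is the paper's proof: you apply the commutator estimate $[G_a,G_b]\subseteq G_{a+b-1}$ of \Cref{lem:basic-depth-estimates} with indices shifted by one, choose representatives of the right depth, and pass to $T_N$ (the normality remark is harmless but unnecessary, since a subgroup containing all the generating commutators already contains $[H_a,H_b]$). Your second route through $\mathfrak D(\cI_{a+b})=H_{a+b}$ is also valid and not circular, because the proof of \Cref{thm:augmentation-realization} does not use this corollary.
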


\begin{proof}
Apply \Cref{lem:basic-depth-estimates} with indices shifted by one.
\end{proof}

\section{Group symbols in the augmentation associated graded}

The integral action on $B_N$ factors through characteristic $p$.  Thus, for
every $a,b\ge1$,
\[
 p\cI_a\subseteq\cI_b,
\]
because $\rho_{\mathbf Z}(pz)=0$ for every $z\in\cI_a$.  In particular, $p\cI_a\subseteq\cI_{a+1}$.  Hence each quotient
$\cI_a/\cI_{a+1}$ is a $k=\mathbf F_p$-vector space and
$\gr_{\cI}\Delta_{\mathbf Z}(T_N)$ is a graded $k$-algebra.

For $2\le s<N$, define the group-symbol subspace
\[
 \cL_s=
 \{(g-1)+\cI_s:g\in G_s/G_N\}
 \subseteq \cI_{s-1}/\cI_s.
\]
\begin{lemma}[Additive symbol map]
\label{lem:additive-symbol}
The map
\[
 \sigma_s:G_s/G_{s+1}\longrightarrow\cL_s,
 \qquad gG_{s+1}\longmapsto(g-1)+\cI_s,
\]
is a well-defined injective homomorphism of elementary abelian $p$-groups,
and its image is $\cL_s$.
\end{lemma}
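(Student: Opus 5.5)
The argument rests on the identity $gh-1=(g-1)+(h-1)+(g-1)(h-1)$ together with the dimension-subgroup computation $\mathfrak D(\cI_a)=G_{a+1}/G_N$ of \Cref{thm:augmentation-realization}. Throughout, identify $G_s/G_{s+1}$ with its image in $T_N$ (since $s<N$, we have $G_N\subseteq G_{s+1}$). For $g\in G_s/G_N$, the relation $\mathfrak D(\cI_{s-1})=G_s/G_N$ gives $g-1\in\cI_{s-1}$, so $(g-1)+\cI_s$ is a genuine element of $\cI_{s-1}/\cI_s$. By definition of $\cL_s$, every element of $\cL_s$ has this form.

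\emph{Homomorphism property.} For $g,h\in G_s/G_N$ the identity above applies, and the cross term $(g-1)(h-1)$ lies in $\cI_{s-1}\cI_{s-1}\subseteq\cI_{2s-2}\subseteq\cI_s$ because $s\ge2$ (\Cref{prop:operator-filtration}). Hence the map $g\mapsto(g-1)+\cI_s$ is a homomorphism $G_s/G_N\to\cI_{s-1}/\cI_s$. Its image is exactly $\cL_s$, which in particular is a subgroup; by the $k$-vector-space structure established just above, $\cL_s$ is an elementary abelian $p$-group.

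\emph{Kernel.} The kernel consists of those $g$ with $g-1\in\cI_s$. By $\mathfrak D(\cI_s)=G_{s+1}/G_N$, this kernel is $G_{s+1}/G_N$. Therefore the map descends to $G_s/G_{s+1}\cong(G_s/G_N)/(G_{s+1}/G_N)$, which settles both well-definedness and injectivity.

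It remains to see that $G_s/G_{s+1}$ is elementary abelian. Commutativity follows from \Cref{lem:basic-depth-estimates}: $[G_s,G_s]\subseteq G_{2s-1}\subseteq G_{s+1}$. Alternatively, once we know $\sigma_s$ is an injective homomorphism into an abelian group, its source is abelian automatically. For the same reason, exponent $p$ transfers through the injective homomorphism into the $\mathbf F_p$-space $\cI_{s-1}/\cI_s$.

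The only subtle point is making sure the cross-term estimate and the dimension-subgroup identity are applied at the correct shifted indices, so that $\cI_{s-1}$ rather than $\cI_s$ contains $g-1$; with that bookkeeping in place, everything else is formal.
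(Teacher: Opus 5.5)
Your proposal is correct and follows essentially the same route as the paper: the identity $gh-1=(g-1)+(h-1)+(g-1)(h-1)$ with the cross term in $\cI_{2s-2}\subseteq\cI_s$, the kernel computed from $\mathfrak D(\cI_s)=G_{s+1}/G_N$, and the elementary abelian property obtained by injectivity into the $\mathbf F_p$-space $\cI_{s-1}/\cI_s$. Your explicit check that $g-1\in\cI_{s-1}$ via $\mathfrak D(\cI_{s-1})=G_s/G_N$, and your factoring of the map through $G_s/G_N$, spell out bookkeeping that the paper leaves implicit.
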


\begin{proof}
The kernel is
\[
 \{g\in G_s:g-1\in\cI_s\}=G_{s+1}
\]
by \Cref{thm:augmentation-realization}.  If $g,h\in G_s$, then
\[
 gh-1=(g-1)+(h-1)+(g-1)(h-1).
\]
The product belongs to $\cI_{2s-2}\subseteq\cI_s$ because $s\ge2$.
Hence $\sigma_s(gh)=\sigma_s(g)+\sigma_s(h)$.  Since the target is a $k$-vector space, injectivity shows that
$G_s/G_{s+1}$ is elementary abelian.  Its image is $\cL_s$ by definition.
\end{proof}

\begin{lemma}[Compatibility of brackets]
\label{lem:bracket-symbol}
If $g\in G_a$ and $h\in G_b$, then the associative commutator of their
augmentation symbols equals the group-commutator symbol:
\[
 [(g-1)+\cI_a,(h-1)+\cI_b]
 =([g,h]-1)+\cI_{a+b-1}
\]
in $\cI_{a+b-2}/\cI_{a+b-1}$.
\end{lemma}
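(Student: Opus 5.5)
We prove this with the standard identity expressing a group commutator in the group ring, together with the filtration bookkeeping of \Cref{thm:augmentation-realization}. Put $u=g-1\in\cI_{a-1}$ and $v=h-1\in\cI_{b-1}$. These memberships hold because $G_a/G_N=H_{a-1}=\mathfrak D(\cI_{a-1})$ and $G_b/G_N=H_{b-1}=\mathfrak D(\cI_{b-1})$; we work with images in $T_N$. With the convention $[g,h]=g^{-1}h^{-1}gh$, we have the exact identity
\[
 [g,h]-1=g^{-1}h^{-1}(gh-hg)=g^{-1}h^{-1}(uv-vu).
\]
This follows from $gh-hg=(1+u)(1+v)-(1+v)(1+u)=uv-vu$.

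Next we strip off the unit prefactor. Write $g^{-1}h^{-1}=1+w$ with $w=g^{-1}h^{-1}-1\in\Delta_{\mathbf Z}(T_N)=\cI_1$. Then
\[
 [g,h]-1=(uv-vu)+w(uv-vu).
\]
By the multiplicativity in \Cref{prop:operator-filtration}, $uv-vu\in\cI_{a-1}\cI_{b-1}\subseteq\cI_{a+b-2}$. Multiplicativity also gives $w(uv-vu)\in\cI_1\cI_{a+b-2}\subseteq\cI_{a+b-1}$. Therefore
\[
 ([g,h]-1)+\cI_{a+b-1}=(uv-vu)+\cI_{a+b-1}
\]
in $\cI_{a+b-2}/\cI_{a+b-1}$.

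The remaining step is to interpret the left-hand side of the statement. The commutator of symbols is taken in the graded algebra $\gr_{\cI}\Delta_{\mathbf Z}(T_N)$, with $\cL_a$ in degree $a-1$ and $\cL_b$ in degree $b-1$. The product of classes $(u+\cI_a)(v+\cI_b)$ is $uv+\cI_{a+b-1}$. This is well defined: changing $u$ by an element of $\cI_a$ changes $uv$ by an element of $\cI_a\cI_{b-1}\subseteq\cI_{a+b-1}$, and symmetrically for $v$. Hence the associative commutator of the two symbols is $(uv-vu)+\cI_{a+b-1}$, which is the required equality. As a consistency check, \Cref{lem:basic-depth-estimates} gives $[g,h]\in G_{a+b-1}$, so the right-hand side is the group-commutator symbol in $\cL_{a+b-1}$ whenever $a+b-1<N$; when $a+b-1\ge N$ both sides vanish.

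The only delicate point is the well-definedness of the graded product and keeping the degree shift by one consistent. Everything else reduces directly to the two-sided ideal property and multiplicativity of the filtration $\cI_\bullet$ established in \Cref{prop:operator-filtration}.
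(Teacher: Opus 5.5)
Your proof is correct and follows the paper's argument. It uses the same identity $[g,h]-1=g^{-1}h^{-1}\bigl((g-1)(h-1)-(h-1)(g-1)\bigr)$, places the parenthesized term in $\cI_{a+b-2}$ by multiplicativity, and absorbs $g^{-1}h^{-1}-1\in\cI_1$ into $\cI_{a+b-1}$; your extra check that the product on $\gr_{\cI}$ is well defined is a detail the paper leaves implicit.
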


\begin{proof}
The exact identity
\[
 [g,h]-1
 =g^{-1}h^{-1}\bigl((g-1)(h-1)-(h-1)(g-1)\bigr)
\]
holds in the group ring.  The parenthesized expression belongs to
$\cI_{a+b-2}$.  Since $g^{-1}h^{-1}-1\in\cI_1$, replacing the left unit by
$1$ changes the expression by an element of $\cI_{a+b-1}$.
\end{proof}

With $\cL_s$ placed in degree $s-1$,
\[
 \cL_{<N}:=\bigoplus_{2\le s<N}\cL_s
\]
is a graded Lie subalgebra of the graded $k$-algebra
$\gr_{\cI}(\Delta_{\mathbf Z}(T_N))$.

\section{Congruence symbols and divergence-free fields}

Let
\[
 R=k[x_1,\ldots,x_n],\qquad V=k^n.
\]
Let $\mathrm{GL}(V)$ act rationally on the coordinate ring by
\[
 (a\cdot f)(v)=f(a^{-1}v),
\]
and use the induced action on polynomial vector fields.  Conjugation by a
linear change of variables gives the corresponding action of
$\mathrm{GL}(V)(k)$ on $G$; it preserves $G_s$ and $\cI_a$, and the
leading-field map is equivariant.
For $s\ge1$, put
\[
 W_s=R_s\otimes V=\Sym^s(V^*)\otimes V
\]
and
\[
 \Ld_s=\ker\left(\diver:W_s\to R_{s-1}\right),
 \qquad
 \diver\left(\sum_i f_i\partial_i\right)=\sum_i\partial_i f_i.
\]

\begin{lemma}[Leading field]
\label{lem:leading-field}
If $g\in G_s$, write
\[
 g^*(x_i)=x_i+f_i+O(\m^{s+1}),\qquad f_i\in R_s.
\]
Then
\[
 \ell_s(g)=\sum_i f_i\partial_i
\]
is divergence-free, depends only on $gG_{s+1}$, and gives an injective
linear map
\[
 G_s/G_{s+1}\hookrightarrow\Ld_s.
\]
The group commutator corresponds to the usual Lie bracket of vector fields.
\end{lemma}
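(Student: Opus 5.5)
The proof has three parts: the divergence-free property from the Jacobian, well-definedness and injectivity from additivity of leading terms, and the bracket formula from a first-order expansion of the commutator.

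\emph{Divergence-free.} Write $g^*(x_i)=x_i+f_i+O(\m^{s+1})$ and take the Jacobian matrix $J=(\partial_j g^*(x_i))=I+(\partial_j f_i)+O(\m^{s})$. Volume preservation means $g^*(\vol)=\det J\cdot\vol=\vol$, so $\det J=1$. Since $s\ge2$, the entries $\partial_jf_i$ lie in $\m^{s-1}\subseteq\m$. Products of two or more of them lie in $\m^{2s-2}\subseteq\m^{s}$, because $s\ge2$. Hence
\[
\det J=1+\sum_i\partial_if_i+O(\m^{s}),
\]
and comparing the degree-$(s-1)$ components gives $\diver\bigl(\sum_if_i\partial_i\bigr)=0$. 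So $\ell_s(g)\in\Ld_s$.

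\emph{Well-defined, additive and injective.} For $g,h\in G_s$, compute
\[
(gh)^*(x_i)=g^*\bigl(x_i+f_i^h+O(\m^{s+1})\bigr)=x_i+f_i^g+f_i^h+O(\m^{s+1}).
\]
This uses $g^*(f_i^h)-f_i^h\in\m^{2s-1}\subseteq\m^{s+1}$, which is \Cref{lem:substitution-estimate}. Thus $\ell_s(gh)=\ell_s(g)+\ell_s(h)$, and $\ell_s$ is a homomorphism $G_s\to\Ld_s$. Its kernel is $\{g\in G_s:g^*(x_i)-x_i\in\m^{s+1}\ \forall i\}=G_{s+1}$, by the definition of $G_{s+1}$. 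This gives well-definedness on $G_s/G_{s+1}$ and injectivity together. Linearity over $k=\mathbf F_p$ follows from additivity.

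\emph{Commutators.} Take $g\in G_a$ and $h\in G_b$, with $g^*=1+u$ and $h^*=1+v$ on $A$ (or on $B_M$ for $M$ large). To leading order, $u$ acts as the derivation $D_g=\sum_i f^g_i\partial_i$ plus terms raising degree by at least $2(a-1)$. This is the Taylor expansion
\[
g^*(\phi)=\phi+\sum_i f_i^g\,\partial_i\phi+O(\text{degree raise }\ge a),
\]
where higher-order terms involve at least two $h_i$, or terms of $g^*(x_i)-x_i$ beyond degree $a$. The identity $[g^*,h^*]-1=(g^*)^{-1}(h^*)^{-1}(uv-vu)$ from \Cref{lem:basic-depth-estimates} then gives
\[
[g^*,h^*](x_i)-x_i=(uv-vu)(x_i)+O(\m^{a+b})=[D_g,D_h](x_i)+O(\m^{a+b}).
\]
The error bounds are as follows. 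The corrections to $u$ and $v$ beyond their derivation parts raise degree by at least $a$ and $b$ respectively, so they contribute degree at least $a+b$ when composed with the other operator. The left factor $(g^*)^{-1}(h^*)^{-1}-1$ raises degree by at least $1$. Therefore $\ell_{a+b-1}([g,h])=\pm[\ell_a(g),\ell_b(h)]$. The sign is fixed by the operator composition convention $(gh)^*=g^*h^*$: operators compose as $u$ acting on $v(x_i)$, and the derivation $D_gD_h-D_hD_g$ evaluated on $x_i$ is exactly the vector-field bracket.

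The main obstacle is this last step. It requires controlling the nonlinear Taylor terms of $g^*$ carefully, and matching the sign convention so that the group commutator, rather than its negative, corresponds to the Lie bracket of vector fields. I would verify the sign first on the single-term example $g^*(x_1)=x_1+x_2^a$ and $h^*(x_2)=x_2+x_3^b$.
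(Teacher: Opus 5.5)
Your proof is correct and follows essentially the same route as the paper. The divergence comes from the trace term of the Jacobian determinant, leading terms add modulo $\m^{s+1}$ with kernel $G_{s+1}$, and the commutator is handled by the first-order expansion of $g^*h^*-h^*g^*$ on $x_i$, with the factor $(g^*)^{-1}(h^*)^{-1}$ not affecting the degree-$(a+b-1)$ term. Two points are cosmetic. The non-derivation part of $u$ raises degree by at least $a$, not $2(a-1)$, because of the tail $F_{>a}$; but $a$ is all you use. Your identity $[g^*,h^*](x_i)-x_i=[D_g,D_h](x_i)+O(\m^{a+b})$ already fixes the sign as $+$, so the $\pm$ and the proposed test example are unnecessary.
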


\begin{proof}
The degree-$(s-1)$ part of the Jacobian determinant of $g$ is
$\sum_i\partial_i f_i$.  Since $g$ preserves the volume form, this term
vanishes.  Modulo $G_{s+1}$, the group law adds the $f_i$.  To identify the
commutator, let the first nonzero homogeneous terms of two substitutions be
\[
 x\longmapsto x+F(x)+F_{>a}(x),
 \qquad
 x\longmapsto x+H(x)+H_{>b}(x),
\]
where $F$ and $H$ have coefficient degrees $a$ and $b$, respectively, and
the indicated remainders have strictly larger degree.  For a homogeneous
polynomial $H_i$ of degree $b$, the terms in
$H_i(x+F)-H_i(x)$ containing exactly $t$ copies of $F$ have degree
\[
 b-t+ta=b+t(a-1).
\]
The term $t=1$ has degree $a+b-1$, while every term with $t\ge2$ has
strictly larger degree because $a\ge2$.  The same estimate applies to
$F_i(x+H)-F_i(x)$, and any occurrence of $F_{>a}$ or $H_{>b}$ also has
strictly larger degree.  Therefore
\[
 g^*h^*(x_i)-h^*g^*(x_i)
 =\sum_j\bigl(F_j\partial_jH_i-H_j\partial_jF_i\bigr)
   +O(\m^{a+b}).
\]
The inverse substitutions are the identity modulo $\m^2$, so composition
with them does not change the degree-$(a+b-1)$ term.  Thus the leading
commutator is the usual vector-field bracket.  If the leading field of $g$
vanishes, then $g\in G_{s+1}$.
\end{proof}

Fix
\[
 \vol=dx_1\wedge\cdots\wedge dx_n.
\]
Contraction identifies $\Ld_d$ with the closed homogeneous
$(n-1)$-forms of coefficient degree $d$.  Let $\Ex_d\subseteq\Ld_d$ be the
inverse image of the exact forms.

\section{Quadratic brackets and exact forms}

\begin{lemma}[Exactness of divergence-free brackets]
\label{lem:cartan-bracket-exact}
For divergence-free polynomial vector fields $D$ and $E$,
\[
 \iota_{[D,E]}\vol=d(\iota_D\iota_E\vol).
\]
In particular, every bracket of divergence-free fields corresponds to an
exact $(n-1)$-form.
\end{lemma}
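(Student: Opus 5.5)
We derive the identity from Cartan calculus on polynomial differential forms, which makes sense over any field, characteristic $p$ included. Write $\cL_D=d\iota_D+\iota_Dd$ for the Lie derivative along a polynomial vector field $D$. The first ingredient is the commutator formula
\[
 \iota_{[D,E]}=\cL_D\iota_E-\iota_E\cL_D .
\]
It is an identity between derivations (of degree $-1$) of the algebra of polynomial forms, so it can be checked on generators. On functions both sides vanish. On the one-forms $dx_j$ it reduces to $[D,E](x_j)=D(E(x_j))-E(D(x_j))$, which is the definition of the bracket. The proof uses only algebraic manipulations with polynomial coefficients and no division by integers, so it holds in characteristic $p$ as well.

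Next we record the divergence condition. Since $\vol$ has top degree, $d\,\vol=0$, and therefore
\[
 \cL_D\vol=d\iota_D\vol=(\diver D)\,\vol .
\]
This last equality comes from a direct computation: $d\bigl(f_i(-1)^{i-1}dx_1\wedge\cdots\widehat{dx_i}\cdots\wedge dx_n\bigr)=\partial_if_i\,\vol$. Hence $\diver D=0$ means both $\cL_D\vol=0$ and $d\iota_D\vol=0$, and the same holds for $E$.

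We now compute directly:
\[
 \iota_{[D,E]}\vol=\cL_D\iota_E\vol-\iota_E\cL_D\vol=\cL_D\iota_E\vol
 =d\iota_D\iota_E\vol+\iota_D\,d\iota_E\vol .
\]
The second equality uses $\cL_D\vol=0$. The last term vanishes because $d\iota_E\vol=(\diver E)\vol=0$. This leaves $\iota_{[D,E]}\vol=d(\iota_D\iota_E\vol)$, which is exactly the claim. Since $\iota_D\iota_E\vol$ is a polynomial $(n-2)$-form, the bracket corresponds to an exact form. If $D$ and $E$ are homogeneous of degrees $a$ and $b$, this $(n-2)$-form is homogeneous, so the statement respects the grading used in $\Ex_{a+b-1}$.

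The main obstacle is justifying the commutator identity $\iota_{[D,E]}=[\cL_D,\iota_E]$ without appealing to differential-geometric flows. We plan to handle it through the derivation argument above. Both $[\cL_D,\iota_E]$ and $\iota_{[D,E]}$ are graded derivations of degree $-1$: the first because it is the graded commutator of a derivation of degree $0$ and one of degree $-1$. Both kill functions. A degree $-1$ derivation is determined by its values on functions and on the exact one-forms $dx_j$, since these generate the form algebra over $R$. So it suffices to compare the two sides on $dx_j$, which is the one-line check already described. Sign conventions should also be fixed so that the paper's bracket $\sum_j(F_j\partial_jH_i-H_j\partial_jF_i)$ agrees with $[D,E]=DE-ED$. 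If the convention were reversed, the identity would acquire a sign, and the form $\iota_E\iota_D\vol$ (its negative) would be exact just the same.
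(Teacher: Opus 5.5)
Your proof is correct and matches the paper's argument: both use $\mathcal L_D=d\iota_D+\iota_Dd$, the identity $\iota_{[D,E]}=\mathcal L_D\iota_E-\iota_E\mathcal L_D$, and the vanishing of $\mathcal L_D\vol$ and $d(\iota_E\vol)$ to conclude $\iota_{[D,E]}\vol=d(\iota_D\iota_E\vol)$. You also verify two facts the paper takes as standard, namely the commutator identity (checked on the generators of the form algebra) and the formula $d\iota_D\vol=(\diver D)\vol$.
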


\begin{proof}
With the conventions
$\mathcal L_D=d\iota_D+\iota_Dd$ and
$\iota_{[D,E]}=\mathcal L_D\iota_E-\iota_E\mathcal L_D$,
the identities $d\vol=0$ and
$\mathcal L_D\vol=\mathcal L_E\vol=0$, one has
\[
 d(\iota_E\vol)
 =\mathcal L_E\vol-\iota_Ed\vol=0.
\]
Therefore
\[
 \iota_{[D,E]}\vol
 =\mathcal L_D(\iota_E\vol)-\iota_E(\mathcal L_D\vol)
 =(d\iota_D+\iota_Dd)(\iota_E\vol)
 =d(\iota_D\iota_E\vol).
\]
\end{proof}

Thus $[\Ld_2,\Ld_{d-1}]\subseteq\Ex_d$.  The reverse inclusion is the
following theorem.

\begin{theorem}[{\cite[Theorem~A]{MaDerived2026}}]
\label{thm:quadratic-generation}
Let $k$ be a field and let $n\ge3$.  For every $d\ge3$,
\[
 [\Ld_2,\Ld_{d-1}]=\Ex_d.
\]
\end{theorem}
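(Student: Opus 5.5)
The inclusion $[\Ld_2,\Ld_{d-1}]\subseteq\Ex_d$ is already given by \Cref{lem:cartan-bracket-exact}, so the plan is to prove the reverse inclusion by working entirely on the form side. Under contraction with $\vol$, $\Ex_d$ corresponds to the exact forms $d\eta$, where $\eta$ is a homogeneous $(n-2)$-form with coefficients of degree $d+1$. This space is spanned by the forms
\[
 d\bigl(x^\alpha\,dx_1\wedge\cdots\widehat{dx_i}\cdots\widehat{dx_j}\cdots\wedge dx_n\bigr),\qquad |\alpha|=d+1.
\]
These correspond, up to sign, to the fields
\[
 D_{ij}(f)=\partial_jf\,\partial_i-\partial_if\,\partial_j .
\]
It therefore suffices to show that every $D_{ij}(x^\alpha)$ with $|\alpha|=d+1$ lies in $[\Ld_2,\Ld_{d-1}]$.

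The mechanism I would use is the following. For $X\in\Ld_2$ and any $(n-2)$-form $\eta$ of coefficient degree $d$, we have $d\eta\leftrightarrow E\in\Ld_{d-1}$. Since $\mathcal L_X$ commutes with $d$ and kills $\vol$, the same Cartan calculus as in \Cref{lem:cartan-bracket-exact} gives
\[
 \iota_{[X,E]}\vol=\mathcal L_X\,d\eta=d(\mathcal L_X\eta).
\]
Hence the target reduces to a statement about $(n-2)$-forms: the span of $\mathcal L_X\eta$, over $X\in\Ld_2$ and $\eta$ of coefficient degree $d$, together with the closed $(n-2)$-forms of coefficient degree $d+1$, must be all $(n-2)$-forms of coefficient degree $d+1$. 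I would use the explicit divergence-free quadratic fields $x_ax_b\partial_c$ and $x_a^2\partial_c$ with $c\notin\{a,b\}$; this is where $n\ge3$ enters, since it guarantees a third index. For such a field,
\[
 \mathcal L_X\bigl(f\,dx_I\bigr)=X(f)\,dx_I+f\,\mathcal L_X(dx_I),
\]
and the second term vanishes when $c\notin I$, or is a controlled correction otherwise. Taking $c$ to be an index missing from $I$, or otherwise arranging it, the main term multiplies a monomial $x^\beta$ by $\beta_c\,x_ax_b/x_c$. I would then run an induction on monomials in a suitable order (by the multidegree in the two omitted directions $i,j$, then lexicographically), showing that each $D_{ij}(x^\alpha)$ is obtained from a bracket up to terms already known to lie in the span.

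The main obstacle is characteristic $p$, since the coefficient $\beta_c$ may vanish. The problematic monomials are those where every usable exponent is $\equiv 0 \pmod p$ in the relevant variable. For these I would first try to exploit the freedom in choosing the primitive $\eta$: replace $x^\alpha dx_I$ by a cohomologous form, modifying it by closed forms such as $d(\cdot)$ of lower-degree $(n-3)$-forms, so that a variable with exponent $\not\equiv -1\pmod p$ is moved into the role of $x_c$. Failing that, I would use the three-index fields $x_ax_b\partial_c$ with all three of $a,b,c$ outside the two omitted directions. This requires handling the case $n=3$ separately, by direct computation with $x_a^2\partial_b$ and $x_ax_b\partial_c$ on $d(x^\alpha\,dx_l)$. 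Finally I would check that when an obstruction survives, the form is in fact closed but not exact at the $(n-2)$ level only in the way permitted by Cartier's description. In that case it contributes nothing to $\Ex_d$, because $d(\text{closed})=0$, so the obstruction is harmless for the exact $(n-1)$-forms.
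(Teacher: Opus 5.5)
The paper gives no proof of this statement; it imports it as Theorem~A of the cited work. Your argument therefore has to stand on its own, and it has a gap. Your reduction is sound: for $\iota_E\vol=d\eta$ one has $\iota_{[X,E]}\vol=d(\mathcal L_X\eta)$, so it suffices to show that the forms $\mathcal L_X\eta$ span the $(n-2)$-forms of coefficient degree $d+1$ modulo closed forms. This uses only exact $E$, so you are really proving the stronger claim $[\Ld_2,\Ex_{d-1}]=\Ex_d$. That discards the Cartier classes of $\Ld_{d-1}$ when $d-1=d_r$, which itself needs justification. But the spanning statement is the whole content of the theorem, and the mechanisms you list provably fail on an explicit family of targets.

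Take $x^\gamma dx_I$ with $\gamma\equiv-\one\pmod p$, for instance $\gamma=(p-1)\one$ and $d=n(p-1)-1$, and let $i,j$ be the indices omitted from $I$. For a monomial shear $X=x_ax_b\partial_c$ with $c\notin\{a,b\}$, the only contribution of $\mathcal L_X(x^\beta dx_I)$ to $x^\gamma dx_I$ is the main term. Its coefficient is $\beta_c=\gamma_c+1\equiv0$, whether $c\notin I$ or $c\in I$, so neither your primary step nor the three-index fallback reaches this target. Changing the primitive does not help either. In $d(x^\kappa dx_{I\setminus\{m\}})$ the coefficient of $x^\gamma dx_I$ is $\gamma_m+1\equiv0$. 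The form multidegree $\gamma+e_I$ is $\not\equiv0$ at $i$, so $H^{n-2}$ vanishes there and closed equals exact. Hence the $x^\gamma dx_I$-coefficient is the same for every representative modulo closed forms. Finally, your last escape is false. The form $x^\gamma dx_I$ is not closed, since $d(x^\gamma dx_I)$ has coefficients $\gamma_i,\gamma_j\equiv-1$ on its two monomials. Thus $d(x^\gamma dx_I)$ is a nonzero element of $\Ex_d$ that must actually be produced.

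What is missing is a mechanism that produces such targets with nonzero coefficient, and both available ones lie outside your plan. The first is the diagonal-divergence fields, which you never use. For example, $X=x_m^2\partial_m-2x_mx_i\partial_i$ with $m\in I$ gives $\mathcal L_X(x^{\gamma-e_m}dx_I)$ coefficient $\gamma_m-2\gamma_i+1\equiv2$ on $x^\gamma dx_I$. The second is the cross terms $x^\beta\,\mathcal L_X(dx_J)$ with $J=(I\setminus\{m\})\cup\{c\}$ and $c\in\{i,j\}$. You called these ``controlled corrections'' but never controlled them; they also create new main terms that must be eliminated in turn. Either mechanism has to be built into a genuine triangular elimination, whose order you never specify. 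Moreover, the statement is asserted over every field, and in characteristic $2$ coefficients such as the $2$ above vanish, so small characteristics need separate treatment. As written, the proposal is a list of strategies, not a proof, and its final step rests on a false closedness claim.
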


\section{Realization of homogeneous fields}

For the remainder of the paper, let $k=\mathbf F_p$ with $p\ge5$.

\begin{lemma}[Formal inverse]
\label{lem:formal-inverse}
Let $\phi:A=k[[x_1,\ldots,x_n]]\to A$ be a continuous $k$-algebra
endomorphism satisfying
\[
 \phi(x_i)\equiv x_i\pmod{\m^2}
 \qquad(1\le i\le n).
\]
Then $\phi$ is a continuous $k$-algebra automorphism.
\end{lemma}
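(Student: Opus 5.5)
The plan is to build an inverse by successive approximation in the $\m$-adic topology, using completeness of $A$.

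\emph{Setup.} Write $\phi=1+u$ as a $k$-linear operator on $A$. The hypothesis gives $\phi(x_i)-x_i\in\m^2$, and the monomial expansion in the proof of \Cref{lem:substitution-estimate} applies verbatim to an endomorphism. Hence $u(\m^q)\subseteq\m^{q+1}$ for every $q\ge0$; for $q=0$ we use $u(1)=0$.

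\emph{Inverse.} Define
\[
 \psi=\sum_{j\ge0}(-u)^j .
\]
Since $u^j(A)\subseteq\m^j$, the sum converges pointwise in the complete ring $A$, and each $\psi(f)$ is a well-defined element of $A$. The map $\psi$ is $k$-linear and satisfies $\psi(\m^q)\subseteq\m^q$, so it is continuous. The telescoping identities $\phi\psi=\psi\phi=1$ hold modulo $\m^M$ for every $M$, hence exactly. Therefore $\phi$ is bijective, with continuous $k$-linear inverse $\psi$.

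\emph{Multiplicativity of the inverse.} The inverse of a bijective ring homomorphism is automatically a ring homomorphism, because
\[
 \phi(\psi(a)\psi(b))=ab .
\]
So $\psi$ is a continuous $k$-algebra automorphism, and hence so is $\phi$.

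\emph{Alternative route via the $\m$-adic graded ring.} One can avoid the operator series. Since $\phi$ preserves $\m^q$ and induces the identity on $\gr_\m A$, we have:
\begin{itemize}
\item[(i)] Injectivity: a nonzero $f$ with leading degree $q$ has $\phi(f)$ of the same nonzero leading form.
\item[(ii)] Surjectivity: successive approximation works, since $f-\phi(f)\in\m^{q+1}$ when $f\in\m^q$; iterate and sum using completeness.
\end{itemize}

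\emph{Main obstacle.} The only delicate point is the convergence and continuity bookkeeping, namely that $u$ strictly raises the $\m$-adic filtration. This is exactly the substitution estimate, so I expect no real difficulty.
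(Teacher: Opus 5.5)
Your proposal is correct and is essentially the paper's argument. The paper writes the induced map on each truncation $A/\m^N$ as $1+u_N$ with $u_N$ nilpotent, inverts it by the finite Neumann series, and passes to the inverse limit; you sum the same series $\sum_{j\ge0}(-u)^j$ directly in the complete ring $A$, check $\phi\psi=\psi\phi=1$ modulo every $\m^M$, and handle multiplicativity and continuity of $\psi$ soundly.
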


\begin{proof}
For every $N\ge2$, the induced linear map
$\phi_N:A/\m^N\to A/\m^N$ preserves the $\m$-adic filtration and is the
identity on every associated-graded piece.  Hence
$\phi_N=1+u_N$, where $u_N$ raises filtration and is therefore nilpotent,
so
\[
 \phi_N^{-1}=1-u_N+u_N^2-\cdots
\]
is a finite sum.  Hence $\phi_N$ is bijective, and its inverse is an algebra
homomorphism.  Uniqueness makes the inverses compatible with
truncation.  Passing to the inverse limit gives a continuous algebra
endomorphism $\psi:A\to A$ with $\psi\phi=\phi\psi=1$.
\end{proof}

\begin{lemma}[Quadratic lifts]
\label{lem:quadratic-lifts}
Every field in $\Ld_2$ is the leading field of an element of $G_2$.
\end{lemma}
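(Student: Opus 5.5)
Given $D=\sum_i f_i\partial_i\in\Ld_2$, we must produce a volume-preserving substitution $g$ with $g^*(x_i)=x_i+f_i+O(\m^3)$. The naive choice $\phi(x_i)=x_i+f_i$ is an automorphism by \Cref{lem:formal-inverse}. Its Jacobian determinant is $J=\det(I+(\partial_jf_i))=1+\diver D+(\text{terms of degree}\ge2)=1+O(\m^2)$, but it need not equal $1$. So the plan is to correct $\phi$ by higher-order terms that leave the leading field unchanged and force the Jacobian to be exactly $1$.

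The correction I would use is to compose $\phi$ with a substitution that changes only one coordinate. Set $\psi(x_1)=x_1+c$ and $\psi(x_j)=x_j$ for $j\ge2$, with $c\in\m^3$ to be determined. The Jacobian of $\psi$ is $1+\partial_1c$, so by the chain rule the Jacobian of the composite is $\phi(1+\partial_1c)\cdot J$ or $J\cdot\psi(J)$, depending on the order of composition. I would take $g=\phi\circ\psi$ in operator order and choose the order of the factors to make the equation convenient. The aim is to solve an equation of the form
\[
 1+\partial_1 c=u^{-1},
\]
where $u\in 1+\m^2$ is a unit computed from $J$ after transporting it by the inverse substitution; \Cref{lem:formal-inverse} supplies that inverse.

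Writing $u^{-1}-1=\sum_\alpha a_\alpha x^\alpha\in\m^2$, we need $c$ with $\partial_1c=u^{-1}-1$. The natural choice is $c=\sum_\alpha a_\alpha x^{\alpha+e_1}/(\alpha_1+1)$. \textbf{This is the main obstacle}: in characteristic $p$ the division fails whenever $\alpha_1\equiv -1\pmod p$. The fix I would try is to spread the integration over all coordinates. Each monomial $x^\alpha$ of degree $\ge2$, with $\alpha\neq(p-1,\dots,p-1)+p\beta$ shape in every variable, can be integrated along some variable $x_j$ for which $\alpha_j\not\equiv-1$. So I would replace the single-coordinate correction by $\psi(x_j)=x_j+c_j$, which has Jacobian $\det(I+\partial c)$. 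Since that determinant is not linear in the $c_j$, I would solve degree by degree. At each degree $m$ the equation becomes a linear condition $\sum_j\partial_jc_j^{(m+1)}=(\text{known degree-}m\text{ term})$, because products of the $\partial c$ contribute only in higher degree. So it suffices that $\diver:W_{m+1}\to R_m$ is surjective in every degree $m\ge2$ for this range. That holds, since a monomial with all exponents $\equiv-1\pmod p$ fails to be a divergence only if every $\alpha_j\equiv -1$.

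This exceptional case is a genuine problem: the monomials $x_1^{p-1}\cdots x_n^{p-1}\cdot x^{p\beta}$ are not divergences, which reflects the Cartier cokernel in top degree. There are two ways to handle it. One option is to show that the unit $J$ never produces such terms; the other is to absorb them by using the freedom in the choice of $\phi$ at degrees $\ge3$. I would instead sidestep the issue with a different construction: realize $D$ as a time-one flow. Since $p\ge5$, the truncated exponential $\exp(D)=\sum_{t<p}D^t/t!$ makes sense, but it is not multiplicative in characteristic $p$, so this route is also delicate. The cleanest fallback is the following. By \Cref{thm:quadratic-generation}, or more simply by the fact that $\Ld_2$ is spanned by fields of the form $x^\gamma\partial_i$ with $x_i\nmid x^\gamma$ together with fields $\partial_j(h)\partial_i-\partial_i(h)\partial_j$ for cubic $h$, it suffices to realize these spanning fields, since leading fields add under composition by \Cref{lem:leading-field}. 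For $x^\gamma\partial_i$ with $x_i\nmid x^\gamma$, the triangular substitution $x_i\mapsto x_i+x^\gamma$ is exactly volume-preserving. For the Hamiltonian-type fields I would use a symplectic-type shear in the $(x_i,x_j)$ plane, obtained by composing two triangular shears, and verify that the composite has the desired leading field. Checking that these fields span $\Ld_2$ for $p\ge5$ is a routine verification that I would carry out by weight decomposition.
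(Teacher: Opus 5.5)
Your final reduction is sound as far as it goes. $\Ld_2$ is indeed spanned by the divergence-free monomial fields $x^\gamma\partial_i$ with $x_i\nmid x^\gamma$, together with the fields $\partial_j(h)\partial_i-\partial_i(h)\partial_j$. For example, $h=x_ix_j^2$ gives $-(x_j^2\partial_j-2x_ix_j\partial_i)$, which is exactly the paper's diagonal-divergence cancellation field. The triangular substitutions lift the monomial fields, and additivity of leading fields reduces the lemma to lifting a spanning set. The appeal to \Cref{thm:quadratic-generation} is beside the point, since that theorem concerns $d\ge3$. The Moser-type correction and exponential routes are abandoned, so everything rests on this last argument.

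The gap is that the only nontrivial part of the lemma is lifting the non-monomial fields such as $x_j^2\partial_j-2x_ix_j\partial_i$. For these you only promise a ``symplectic-type shear obtained by composing two triangular shears'' and defer the check. Suppose the shears are the volume-preserving ones you just used, $x_i\mapsto x_i+a$ with $\partial_i a=0$. Then this cannot work. Both factors lie in $G_2$, so the composite's degree-two leading field is the sum of theirs. Any such composite therefore stays in the span of divergence-free monomial fields. That span misses an $n(n-1)$-dimensional complement in $\Ld_2$, and the missing fields are precisely the ones you need.

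A genuinely new ingredient is required. The paper uses $(u,z)\mapsto\bigl(u/(1-cu),\,z(1-cu)^2\bigr)$. This is a composite of two one-coordinate substitutions, but neither factor preserves volume on its own: their Jacobians are $(1-cu)^{-2}$ and $(1-cu)^{2}$. The point is that these cancel exactly, and the composite has leading field $c\bigl(u^2\partial_u-2uz\partial_z\bigr)$.

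Alternatively, you could conjugate the shear $x_1\mapsto x_1+cx_2^2$ by linear changes of variables. This realizes $\ell^2\partial_v$ for any linear form $\ell$ and vector $v$ with $\ell(v)=0$. Take $v=\partial_u+t\partial_z$ and $\ell=tu-z$. The coefficient of $t$ in $\ell^2\partial_v$ is $z^2\partial_z-2uz\partial_u$. Since the expression is cubic in $t$, evaluating at $t=0,1,2,3$ (distinct because $p\ge5$) and inverting a Vandermonde matrix extracts that coefficient. Either construction closes the gap; without one, the proposal does not prove the lemma.
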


\begin{proof}
We first describe a spanning set for $\Ld_2$.  Fix $j$.  The
only quadratic monomial fields whose divergence contains $x_j$ are
\[
 x_j^2\partial_j
 \quad\text{and}\quad
 x_jx_i\partial_i\quad(i\ne j),
\]
with divergence coefficients $2$ and $1$, respectively.  The kernel of this
single linear functional is spanned by
\[
 x_j^2\partial_j-2x_jx_i\partial_i\qquad(i\ne j).
\]
Every remaining quadratic monomial field differentiates in a coordinate
absent from its coefficient and is therefore divergence-free.

If $i\notin\{a,b\}$, the monomial shear
\[
 x_i\longmapsto x_i+c x_ax_b,
 \qquad x_j\longmapsto x_j\quad(j\ne i)
\]
has Jacobian determinant one and leading field $c x_ax_b\partial_i$.
Indeed, its Jacobian matrix differs from the identity only in row $i$, and
its $(i,i)$-entry remains $1$ because $x_ax_b$ is independent of $x_i$.
Its linear term is the identity, so it is a formal automorphism by
\Cref{lem:formal-inverse}.

On two coordinates $(u,z)$, the formal maps
\[
 (u,z)\longmapsto\left(\frac{u}{1-cu},\ z(1-cu)^2\right)
\]
are well-defined formal substitutions because $1-cu$ is a unit.  Their Jacobian matrix is triangular with diagonal entries
$(1-cu)^{-2}$ and $(1-cu)^2$, hence has determinant one; the quadratic
leading field is
\[
 c\bigl(u^2\partial_u-2uz\partial_z\bigr).
\]
Their linear term is the identity, so \Cref{lem:formal-inverse} makes
them formal automorphisms.  Permuting coordinates gives all
diagonal-divergence cancellation fields.  Together with the off-diagonal
monomial shears, they span $\Ld_2$.  Products of the corresponding
automorphisms add their leading fields.
\end{proof}

\begin{lemma}[Cartier monomial lifts]
\label{lem:cartier-monomial-lifts}
Let $r\ge0$ and $d=(p-1)(n-1)+pr$.  Every Cartier monomial basis class in
degree $d$ has a representative vector field arising as the leading field of
a pure Jacobian-one shear.  These classes span the degree-$d$ de Rham
cohomology.
\end{lemma}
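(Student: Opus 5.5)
Take the Cartier isomorphism for polynomial forms over $k=\mathbf F_p$ as the starting point. It identifies $H^{n-1}_{\dR}(R)$ with the span of classes of forms
\[
 x^{p\beta}\,(x_1\cdots\widehat{x_j}\cdots x_n)^{p-1}\,dx_1\wedge\cdots\wedge\widehat{dx_j}\wedge\cdots\wedge dx_n,
\]
one for each $j$ and each multi-index $\beta$. These are the images under $C^{-1}$ of the basis forms $x^\beta\,dx_{\hat j}$. A homogeneous class in $R_d\otimes\Lambda^{n-1}$ has coefficient degree
\[
 (p-1)(n-1)+p|\beta| .
\]
So the nonzero degrees are exactly $d=(p-1)(n-1)+pr$ with $r=|\beta|$. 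Spanning is then the surjectivity half of Cartier's theorem: every closed form is congruent modulo exact forms to a combination of these. This settles the last sentence of the statement and fixes the \emph{Cartier monomial basis classes}.

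Next, translate each basis form into a vector field. Up to sign, contraction with $\vol$ sends the field $f\partial_j$ to $\pm f\,dx_{\hat j}$. Hence the basis form for $(j,\beta)$ is $\iota_D\vol$ for
\[
 D=\pm\, x^{p\beta}\prod_{i\neq j}x_i^{p-1}\,\partial_j .
\]
The coefficient of $D$ still involves $x_j$ through $x_j^{p\beta_j}$. Its divergence is therefore $\partial_j$ of the coefficient, which is $p\beta_j x_j^{p\beta_j-1}(\cdots)=0$. So $D\in\Ld_d$, as it must be, since the form is closed.

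To realize $D$ as a leading field, take the pure shear
\[
 x_j\longmapsto x_j+f,\qquad x_i\longmapsto x_i\quad(i\neq j),
\]
where $f=\pm x^{p\beta}\prod_{i\ne j}x_i^{p-1}$. Its Jacobian matrix differs from the identity only in row $j$, so its determinant is $1+\partial_j f$. When $\beta_j=0$, $f$ does not involve $x_j$ and the determinant is exactly $1$, as in the monomial shears of \Cref{lem:quadratic-lifts}. When $\beta_j>0$, $\partial_j f=p\beta_j(\cdots)=0$ in characteristic $p$, because differentiation of $x_j^{p\beta_j}$ produces the factor $p\beta_j$. In both cases the Jacobian determinant is identically one. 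Since $d\ge2$ (indeed $d\ge(p-1)(n-1)\ge8$), the linear part is the identity. \Cref{lem:formal-inverse} then makes the shear a formal automorphism, hence an element of $G_d\subseteq G_2$, and its leading field is $D$ by \Cref{lem:leading-field}.

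The step needing most care is the correct form of the Cartier basis. In particular, one must confirm that the polynomial (not Laurent) statement puts the $p-1$ exponents on all variables other than $j$ while leaving $x_j$ with only a $p$-th power. This holds for the top-minus-one degree because $C^{-1}(x^\beta\,dx_{\hat j})=x^{p\beta}\prod_{i\ne j}x_i^{p-1}\,dx_{\hat j}$. One must also confirm that these classes span $H^{n-1}$, by writing $H^\bullet(R)$ as the Frobenius twist of $\Omega^\bullet_R$ via $C^{-1}$. The remaining sign bookkeeping in the contraction is routine.
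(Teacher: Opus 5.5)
Your proposal is correct and matches the paper's proof: the representative $x^{p\beta+(p-1)(\one-e_j)}\partial_j$ involves $x_j$ only through a $p$-th power, so the pure shear $x_j\mapsto x_j+f$ has Jacobian determinant $1+\partial_j f=1$ and identity linear part, and \Cref{lem:formal-inverse} makes it an element of $G_d$ with leading field $D$. The only difference is in the spanning claim: you quote Cartier's theorem for it, whereas the paper derives it from the elementary Künneth splitting of the one-variable de Rham complexes in \Cref{sec:polynomial-de-rham}, via \eqref{eq:de-rham-basis} and \eqref{eq:de-rham-support}.
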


\begin{proof}
A basis representative has the form
\[
 M_{\alpha,j}
 =x^{p\alpha+(p-1)(\one-e_j)}\partial_j,
 \qquad |\alpha|=r.
\]
The exponent of $x_j$ is $p\alpha_j$, so
\[
 \partial_j\bigl(x^{p\alpha+(p-1)(\one-e_j)}\bigr)=0.
\]
Therefore
\[
 x_j\longmapsto x_j+c x^{p\alpha+(p-1)(\one-e_j)},
 \qquad x_i\longmapsto x_i\quad(i\ne j),
\]
has Jacobian determinant
$1+c\partial_j(x^{p\alpha+(p-1)(\one-e_j)})=1$ and leading field
$cM_{\alpha,j}$.
Its linear term is the identity, so it is a formal automorphism by
\Cref{lem:formal-inverse}.
\end{proof}

\begin{proposition}[Surjectivity of the leading-field map]
\label{prop:leading-surjective}
For every $2\le s<N$, the map in \Cref{lem:leading-field} is an
isomorphism
\[
 G_s/G_{s+1}\cong\Ld_s.
\]
\end{proposition}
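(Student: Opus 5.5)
Injectivity is already part of \Cref{lem:leading-field}, so the plan is to prove surjectivity onto $\Ld_s$ by induction on $s$. Since the leading-field map is additive modulo $G_{s+1}$, it suffices to show that the image is all of $\Ld_s$. The base case $s=2$ is \Cref{lem:quadratic-lifts}. For the inductive step, let $3\le s<N$ and assume every field in $\Ld_{s'}$ with $2\le s'<s$ is a leading field. The sum $\Ex_s+\Span\{\text{Cartier monomial representatives}\}$ is all of $\Ld_s$: closed homogeneous $(n-1)$-forms of degree $s$ modulo exact ones form the degree-$s$ de Rham cohomology, which vanishes unless $s=d_r$ and is then spanned by the classes in \Cref{lem:cartier-monomial-lifts}. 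So I will treat the two pieces separately.

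For the exact part, \Cref{thm:quadratic-generation} gives $\Ex_s=[\Ld_2,\Ld_{s-1}]$. Take $D\in\Ld_2$ and $E\in\Ld_{s-1}$. By the base case and the induction hypothesis, choose $g\in G_2$ with $\ell_2(g)=D$ and $h\in G_{s-1}$ with $\ell_{s-1}(h)=E$. The commutator clause of \Cref{lem:leading-field}, that is, the computation of the degree-$(2+(s-1)-1)$ term, shows that $[g,h]\in G_s$ with leading field $\pm[D,E]$. Here \Cref{lem:basic-depth-estimates} gives $[G_2,G_{s-1}]\subseteq G_s$, and the sign depends only on the commutator convention, so it is harmless because the image is a subspace. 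The image in $\Ld_s$ is additive and closed under $\mathbf F_p$-multiples, so all of $\Ex_s$ lies in the image.

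For the cohomological part, when $s=d_r$ the pure Jacobian-one shears of \Cref{lem:cartier-monomial-lifts} lie in $G_s$ and realize the representatives $M_{\alpha,j}$ as leading fields. These shears are volume-preserving because their Jacobian determinant is exactly $1$, not merely $1$ to leading order. Adding the two parts gives surjectivity. The bound $s<N$ plays no role at the level of $G$; it matters only for transporting the statement to $T_N$ and to $\cL_s$.

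The main obstacle is justifying the decomposition $\Ld_s=\Ex_s+(\text{Cartier span})$. This requires the polynomial Cartier isomorphism. In top-minus-one degree the homogeneous cohomology of $k[x]$ is spanned by the classes $x^{p\alpha}\cdot x^{(p-1)\one}\,d\log$-type forms, and contracting $M_{\alpha,j}$ with $\vol$ gives exactly these forms. Two points need checking: that contraction matches the exponents so the classes land in degree $d_r$, and that cohomology vanishes in all other degrees. I would take this from Cartier's theorem as cited, since the last sentence of \Cref{lem:cartier-monomial-lifts} already asserts the spanning.
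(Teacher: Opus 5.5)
Your proposal is correct and follows the paper's proof essentially step for step. The shared steps are: induction on $s$ with \Cref{lem:quadratic-lifts} as base case; the splitting $\Ld_s=\Ex_s+\Span_k\{M_{\alpha,j}\}$ read off from \eqref{eq:de-rham-basis} and \eqref{eq:de-rham-support}; \Cref{thm:quadratic-generation} plus group commutators of lifts for the exact part; and the pure shears of \Cref{lem:cartier-monomial-lifts} for the Cartier classes. The only slip is cosmetic: the degree-$d_r$ representatives are $\pm\iota_{M_{\alpha,j}}\vol=x^{p\alpha+(p-1)(\one-e_j)}\,dx_1\wedge\cdots\widehat{dx_j}\cdots\wedge dx_n$, not $x^{p\alpha}x^{(p-1)\one}$ times a $d\log$ form.
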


\begin{proof}
Injectivity follows from \Cref{lem:leading-field}.  For surjectivity we
argue by induction on $s$.

The case $s=2$ is \Cref{lem:quadratic-lifts}.  Suppose $s\ge3$.  The de
Rham calculation in \Cref{sec:polynomial-de-rham}, specifically
\eqref{eq:de-rham-basis} and \eqref{eq:de-rham-support}, shows that every
class modulo $\Ex_s$ is a linear combination of the Cartier monomials in
\Cref{lem:cartier-monomial-lifts}, each of which has a pure shear lift.  By
\Cref{thm:quadratic-generation}, every element of $\Ex_s$ is a sum of
brackets $[u,v]$ with $u\in\Ld_2$ and $v\in\Ld_{s-1}$.  Lift $u$ and $v$
by the quadratic lemma and the induction hypothesis.  Their group
commutator has leading field $[u,v]$.  Since products add leading fields,
every element of $\Ld_s$ is realized.
\end{proof}

The additive-symbol and leading-field maps give isomorphisms
\[
 \cL_s\cong G_s/G_{s+1}\cong\Ld_s.
\]
By \Cref{lem:bracket-symbol}, these assemble to an isomorphism of graded
Lie algebras
\begin{equation}
\label{eq:graded-lie-isomorphism}
 \bigoplus_{2\le s<N}\cL_s
 \cong
 \bigoplus_{2\le s<N}\Ld_s.
\end{equation}
Here both $\cL_s$ and $\Ld_s$ have graded degree $s-1$.
We endow $\cL_s$ with the rational $\mathrm{GL}(V)$-module structure
transported through the canonical leading-field isomorphism
$\cL_s\xrightarrow{\sim}\Ld_s$.  For
$a\in\mathrm{GL}(V)(k)$ this transported action agrees with the conjugation
action above, by the equivariance of the leading-field map.  With these
structures, \eqref{eq:graded-lie-isomorphism} is an isomorphism of rational
$\mathrm{GL}(V)$-modules in every homogeneous degree.

\section{Polynomial de Rham cohomology}\label{sec:polynomial-de-rham}

For one variable, the de Rham complex is
\[
 0\longrightarrow k[x]
 \xrightarrow{\partial_x}
 k[x]dx
 \longrightarrow0.
\]
In characteristic $p$,
\[
 \ker\partial_x=k[x^p],
\qquad
 \operatorname{coker}\partial_x=k[x^p]x^{p-1}dx.
\]
Indeed, $\partial_x(x^m)=mx^{m-1}$ vanishes exactly when $p\mid m$,
while the monomials omitted from the image are precisely $x^{pa+p-1}dx$.

The $n$-variable de Rham complex is the tensor product of the one-variable
complexes.  In each variable, split the monomial basis into the cohomology
representatives above and the exact pairs
$x^m\mapsto m x^{m-1}dx$ for $p\nmid m$.  Tensoring these splittings gives
\begin{equation}
\label{eq:de-rham-basis}
 H^q_{\dR}(R)
 =k[x_1^p,\ldots,x_n^p]\otimes
 \Lambda^q\Span_k\{x_i^{p-1}dx_i:1\le i\le n\}.
\end{equation}

For $q=n-1$, a monomial basis is
\[
 x^{p\alpha+(p-1)(\one-e_j)}
 dx_1\wedge\cdots\widehat{dx_j}\cdots\wedge dx_n.
\]
Its coefficient degree is
\[
 (p-1)(n-1)+p|\alpha|.
\]
Therefore, with
\[
 d_r=(p-1)(n-1)+pr,
\]
we obtain
\begin{equation}
\label{eq:de-rham-support}
 H^{n-1}_{\dR}(R)_d=0\quad(d\ne d_r),
\end{equation}
and at $d=d_r$ the basis is indexed by a monomial of degree $r$ and a
missing differential $dx_j$.

\begin{lemma}[Inverse Cartier isomorphism]
\label{lem:inverse-cartier-natural}
For every $q\ge0$, define on decomposable tensors
\[
 C^{-1}_q:\Sym(V^*)^{(1)}\otimes
 (\Lambda^qV^*)^{(1)}\longrightarrow H^q_{\dR}(R)
\]
by
\begin{equation}
\label{eq:inverse-cartier-direct}
 C^{-1}_q\bigl(f^{(1)}\otimes
 (\ell_1\wedge\cdots\wedge\ell_q)^{(1)}\bigr)
 =\left[
 f^p\prod_{t=1}^q\ell_t^{p-1}
 d\ell_1\wedge\cdots\wedge d\ell_q
 \right].
\end{equation}
Give the source its Cartier grading: if $f$ is homogeneous of degree $r$
and $\omega\in\Lambda^qV^*$, set
\[
 \deg_C\bigl(f^{(1)}\otimes\omega^{(1)}\bigr)
 =pr+q(p-1).
\]
With this grading, $\bigoplus_q C^{-1}_q$ is an isomorphism of graded
algebras.

Each $C^{-1}_q$ is $\mathrm{GL}(V)$-equivariant.  In particular,
for $\ell=\sum_j a_jx_j$,
\begin{equation}
\label{eq:linear-cartier-naturality}
 [\ell^{p-1}d\ell]
 =\sum_j a_j^p[x_j^{p-1}dx_j].
\end{equation}
\end{lemma}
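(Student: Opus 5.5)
We need three things: that $C^{-1}_q$ is well defined, that it is a graded algebra isomorphism, and that it is equivariant. We first show that \eqref{eq:inverse-cartier-direct} is well defined, multilinear and alternating in the $\ell_t$, and additive in $f$ after Frobenius twist. This is a cohomology computation.

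\emph{Degree one, $f=1$.} The key identity is that $\ell\mapsto[\ell^{p-1}d\ell]$ is $p$-linear, which gives \eqref{eq:linear-cartier-naturality} directly. Over $\mathbf F_p$ we compute
\[
(\ell+\ell')^{p-1}d(\ell+\ell')-\ell^{p-1}d\ell-\ell'^{p-1}d\ell'
=\tfrac1p\,d\bigl((\ell+\ell')^p-\ell^p-\ell'^p\bigr)
\]
where $\frac1p\bigl((\ell+\ell')^p-\ell^p-\ell'^p\bigr)$ is interpreted as the integral polynomial $\sum_{0<i<p}\frac1p\binom pi\ell^i\ell'^{p-i}$. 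Concretely, $\sum_{0<i<p}\frac1p\binom{p}{i}\ell^{i}\ell'^{p-i}$ has integer coefficients, and its differential reproduces the cross terms. The cleanest route is to work over $\mathbf Z$ and reduce mod $p$. Scalars contribute $a^p$, and over $\mathbf F_p$ we have $a^p=a$; the twist records rationality over extensions.

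\emph{Products and closedness.} The products $f^p\prod_t\ell_t^{p-1}d\ell_1\wedge\cdots\wedge d\ell_q$ are closed because $d(f^p)=0$ and each factor $\ell^{p-1}d\ell=d(\ell^p)/p$ (interpreted integrally) is closed. Multiplying a closed form by $f^p$ sends exact forms to exact forms, since $f^p\,d\eta=d(f^p\eta)$. It follows that cohomology is a module over $k[x^p]$, and also that cup products of classes are well defined. Hence \eqref{eq:inverse-cartier-direct} is the product of the classes $[f^p]$ and $[\ell_t^{p-1}d\ell_t]$. Multilinearity then follows from the degree-one case, and alternation from $d\ell\wedge d\ell=0$. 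Since $f\mapsto f^p$ is additive and multiplicative, the map descends to the Frobenius-twisted source, and multiplicativity of the map is then formal.

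\emph{Bijectivity.} Take the coordinate basis $x^{\alpha}\otimes x_{j_1}\wedge\cdots\wedge x_{j_q}$. The map sends it to $x^{p\alpha}\prod x_{j_t}^{p-1}dx_{j_1}\wedge\cdots\wedge dx_{j_q}$, which is exactly the basis in \eqref{eq:de-rham-basis}. The Cartier grading matches the coefficient degree $p|\alpha|+q(p-1)$.

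\emph{Equivariance.} A linear substitution $a\in\mathrm{GL}(V)$ commutes with $d$ and with multiplication. It therefore sends $f^p\prod\ell_t^{p-1}d\ell_t$ to the same expression in $a\cdot f$ and $a\cdot\ell_t$, and $C^{-1}_q$ is equivariant for $k$-points. For rational equivariance we run the same computation over $k[\mathrm{GL}(V)]$, i.e.\ with generic coefficients: the formula is functorial in the commutative base ring, and both sides are comodule maps. Base change of the de Rham computation is legitimate because the splitting in \eqref{eq:de-rham-basis} is defined over $\mathbf F_p$ and is free.

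The main obstacle is the additivity step: making precise the divided polynomial $\frac1p((\ell+\ell')^p-\ell^p-\ell'^p)$ and checking that its differential is exactly the cross-term difference modulo $p$. I would verify this first by lifting to $\mathbf Z[x]$, where $d(\ell^p)=p\,\ell^{p-1}d\ell$, dividing by $p$, and reducing.
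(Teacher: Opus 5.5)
Your proof is correct, and its architecture is the same as the paper's: a degree-one identity, then wedge products and the factor $f^p$, bijectivity on the monomial basis of \eqref{eq:de-rham-basis}, and equivariance from naturality of pullback plus base change along the $\mathbf F_p$-defined splitting.

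The real difference is the degree-one step, where the two arguments run in opposite directions.
\begin{itemize}
\item The paper proves \eqref{eq:linear-cartier-naturality} first, by direct computation. It expands $(\sum_i a_ix_i)^{p-1}(\sum_j a_j\,dx_j)$ and reads off the coefficient $a_j^{p-1}a_j=a_j^p$ of the basis monomial $x_j^{p-1}dx_j$. It then notes that the rest of this closed form lies in the contractible tensor summands and is therefore exact. Additivity and $p$-semilinearity are read off from the resulting formula.
\item You instead prove additivity of $\ell\mapsto[\ell^{p-1}d\ell]$ from the exact correction $d\bigl(\sum_{0<i<p}\tfrac1p\binom pi\ell^i\ell'^{p-i}\bigr)$, and deduce \eqref{eq:linear-cartier-naturality} from additivity and the scalar rule.
\end{itemize}

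The paper's version is shorter because it reuses the splitting it needs anyway for bijectivity. Yours is the classical Cartier argument. It is coordinate-free and applies to nonlinear $\ell$. Because the correcting identity holds universally, it also gives additivity over every commutative $\mathbf F_p$-algebra at once, which is exactly what the rational-equivariance step uses.

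The obstacle you flag is milder than you fear. Since $\tfrac ip\binom pi=\binom{p-1}{i-1}$ and $\tfrac{p-i}p\binom pi=\binom{p-1}{i}$, differentiating term by term reproduces the cross terms of $(\ell+\ell')^{p-1}d(\ell+\ell')-\ell^{p-1}d\ell-\ell'^{p-1}d\ell'$ directly. This works over any ring, with no lift to $\mathbf Z$ and no division by $p$.
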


\begin{proof}
In degree one and polynomial degree $p-1$, the splitting used in
\eqref{eq:de-rham-basis} has cohomology basis
$[x_j^{p-1}dx_j]$.  In the expansion of
\[
 \ell^{p-1}d\ell
 =\left(\sum_i a_ix_i\right)^{p-1}
  \left(\sum_j a_jdx_j\right),
\]
the coefficient of $x_j^{p-1}dx_j$ is $a_j^p$.  Projection to the cohomology summands gives the right-hand side of
\eqref{eq:linear-cartier-naturality}.  The complementary closed component
lies in the contractible tensor summands and is exact.  The same calculation
gives additivity in the Frobenius-twisted linear variable.  Scalar
multiplication is twisted because $(c\ell)^{p-1}d(c\ell)=
c^p\ell^{p-1}d\ell$.

Taking wedge products of \eqref{eq:linear-cartier-naturality} proves that
\eqref{eq:inverse-cartier-direct} is alternating and Frobenius-semilinear
in the $\ell_t$, and multiplication by $f^p$ gives the polynomial factor.
Thus the formula descends to the twisted tensor product and defines a
homomorphism of graded algebras.  On monomial basis elements it sends
\[
 (x^\alpha)^{(1)}\otimes
 (dx_{i_1}\wedge\cdots\wedge dx_{i_q})^{(1)}
\]
to the basis class
\[
 x^{p\alpha}x_{i_1}^{p-1}\cdots x_{i_q}^{p-1}
 dx_{i_1}\wedge\cdots\wedge dx_{i_q}
\]
from \eqref{eq:de-rham-basis}, and is therefore an isomorphism.

The span of the $p$-th powers $x_i^p$ transforms by the entrywise $p$-th
powers of the matrix acting on the $x_i$; this is the Frobenius-twisted
action on $(V^*)^{(1)}$.  Equation \eqref{eq:linear-cartier-naturality}
gives the same action on the classes $[x_i^{p-1}dx_i]$.  Because pullback commutes with multiplication, wedge product, and $d$,
formula \eqref{eq:inverse-cartier-direct} intertwines the ordinary action on
$H^q_{\dR}(R)$ with the Frobenius-twisted actions on both source factors.
The construction commutes with extension of scalars to every commutative
$k$-algebra and is therefore an isomorphism of rational
$\mathrm{GL}(V)$-modules.
\end{proof}

\begin{proposition}[Rational module form]
\label{prop:rational-module}
For every $r\ge0$, the following natural sequence is exact and
$\mathrm{GL}(V)$-equivariant:
\[
 0\longrightarrow\Ex_{d_r}
 \longrightarrow\Ld_{d_r}
 \longrightarrow\delta_1\otimes W_r^{(1)}
 \longrightarrow0,
\]
where
\[
 \delta_1=(\det V^*)^{\otimes(p-1)},
 \qquad W_r=\Sym^r(V^*)\otimes V.
\]
For $d\ne d_r$, one has $\Ld_d=\Ex_d$.
\end{proposition}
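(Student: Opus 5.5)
The plan is to realize the sequence as the contraction isomorphism with $\vol$, followed by the quotient of closed $(n-1)$-forms by exact ones. I then identify that quotient with $H^{n-1}_{\dR}(R)_{d_r}$ via \Cref{lem:inverse-cartier-natural}, and twist by the transformation law of $\vol$.

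\emph{The map and its equivariance.} Contraction $D\mapsto\iota_D\vol$ identifies $\Ld_d$ with the closed homogeneous $(n-1)$-forms of coefficient degree $d$. By definition it carries $\Ex_d$ onto the exact ones. Hence $\Ld_d/\Ex_d\cong H^{n-1}_{\dR}(R)_d$, and exactness at $\Ex_{d_r}$ and $\Ld_{d_r}$ is automatic. By \eqref{eq:de-rham-support}, $H^{n-1}_{\dR}(R)_d=0$ for $d\ne d_r$, which gives $\Ld_d=\Ex_d$ in that case.

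Contraction is not $\mathrm{GL}(V)$-equivariant on the nose. For $a\in\mathrm{GL}(V)$ we have $a\cdot\vol=\chi(a)\vol$, where $\chi$ is the character on $\Lambda^nV^*=\det V^*$ under the action $(a\cdot f)(v)=f(a^{-1}v)$. Hence $\iota_{a\cdot D}(a\cdot\vol)=a\cdot(\iota_D\vol)$, so the contraction gives an equivariant isomorphism
\[
\Ld_d\cong(\det V^*)^{-1}\otimes\{\text{closed }(n-1)\text{-forms of degree }d\},
\]
and the same twist passes to the quotients.

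\emph{The cohomology term.} By \Cref{lem:inverse-cartier-natural} with $q=n-1$ and Cartier degree $pr+(n-1)(p-1)=d_r$, the inverse Cartier map gives an equivariant isomorphism
\[
H^{n-1}_{\dR}(R)_{d_r}\cong\bigl(\Sym^r(V^*)\otimes\Lambda^{n-1}V^*\bigr)^{(1)}.
\]
I then use the natural isomorphism $\Lambda^{n-1}V^*\cong\det V^*\otimes V$, given by contraction of $\det V^*$ against $V$. Combining,
\[
\Ld_{d_r}/\Ex_{d_r}\cong(\det V^*)^{-1}\otimes(\det V^*)^{(1)}\otimes W_r^{(1)}.
\]
Since $(\det V^*)^{(1)}=(\det V^*)^{\otimes p}$, the determinant factor is $(\det V^*)^{\otimes(p-1)}=\delta_1$. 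All the maps involved are natural and commute with base change, so the result holds as rational modules.

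\emph{Main obstacle.} The step I expect to need the most care is the bookkeeping of the determinant twists: the sign and direction of the character of $\vol$, and the identification $\Lambda^{n-1}V^*\cong\det V^*\otimes V$ (rather than $\det V\otimes V^*$ or similar). To fix them, I would first check the isomorphism explicitly on diagonal tori. The basis class $M_{\alpha,j}$ from \Cref{lem:cartier-monomial-lifts} should have torus weight $p\alpha+(p-1)(\one-e_j)$ on the coefficient and $+e_j$ from $\partial_j$ (up to the sign convention). This equals $p\alpha+(p-1)\one-pe_j$, which matches the weight of $\delta_1\otimes\bigl((x^\alpha)\otimes e_j\bigr)^{(1)}$. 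Once the torus weights match, the naturality of each map above pins down the full $\mathrm{GL}(V)$-equivariance.
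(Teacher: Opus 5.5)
Your proposal is correct and follows the paper's proof essentially step for step: contraction with $\vol$ twisted by $\det V^*$, then the inverse Cartier isomorphism in degree $q=n-1$, the identity $\Lambda^{n-1}V^*\cong V\otimes\det V^*$, and $(\det V^*)^{(1)}=(\det V^*)^{\otimes p}$, with the vanishing for $d\ne d_r$ coming from the de Rham support computation. One small slip in your torus sanity check: $\partial_j$ always carries the weight opposite to that of $x_j$, so relative to the coefficient it contributes $-e_j$ rather than $+e_j$, and only $-e_j$ gives your stated total $p\alpha+(p-1)\one-pe_j$.
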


\begin{proof}
Contraction with $\vol$ gives a natural equivariant isomorphism
\[
 (\Ld_d/\Ex_d)\otimes\det(V^*)
 \cong H^{n-1}_{\dR}(R)_d.
\]
By \Cref{lem:inverse-cartier-natural}, the degree-$d_r$ term is
$\mathrm{GL}(V)$-equivariantly identified with
\[
 \Sym^r(V^*)^{(1)}\otimes
 \bigl(\Lambda^{n-1}V^*\bigr)^{(1)}.
\]
Using the canonical identity
\[
 \Lambda^{n-1}V^*\cong V\otimes\det(V^*)
\]
and the fact that Frobenius twist sends the determinant character to its
$p$-th power, the right-hand side becomes
\[
 \Sym^r(V^*)^{(1)}\otimes V^{(1)}\otimes
 (\det V^*)^{\otimes p}.
\]
Tensoring with $(\det V^*)^{-1}$ gives
\[
 (\det V^*)^{\otimes(p-1)}\otimes
 \bigl(\Sym^r(V^*)\otimes V\bigr)^{(1)}
 =\delta_1\otimes W_r^{(1)}.
\]
The vanishing in all other degrees follows from
\eqref{eq:de-rham-support}.
\end{proof}

\section{Proof of the Cartier augmentation theorem}

Let $\cX_d\subseteq\cL_d$ be the image of $\Ex_d$ under the isomorphism
\eqref{eq:graded-lie-isomorphism}.

\begin{proof}[Proof of \Cref{thm:cartier-augmentation}]
Under \eqref{eq:graded-lie-isomorphism}, the bracket of group symbols becomes
the polynomial vector-field bracket.  Hence
\[
 [\cL_2,\cL_{d-1}]
 \cong[\Ld_2,\Ld_{d-1}]
 =\Ex_d
 \cong\cX_d
\]
by \Cref{thm:quadratic-generation}.  The quotient is therefore $\Ld_d/\Ex_d$, and
\Cref{prop:rational-module} gives its support and module structure.
Every homogeneous bracket in degree $d$ lies in $\cX_d$ by
\Cref{lem:cartan-bracket-exact}, while the fixed-source bracket already fills
$\cX_d$.  Hence
\[
 [\cL_{<N},\cL_{<N}]_d
 =\sum_{a+b-1=d}[\cL_a,\cL_b]
 =\cX_d,
\]
so the displayed quotient is the full degree-$d$ indecomposable quotient.
\end{proof}

\begin{corollary}[Dimension]
If $d=d_r<N$,
\[
 \dim_k\bigl(\cL_{d_r}/[\cL_2,\cL_{d_r-1}]\bigr)
 =n\binom{n+r-1}{r}.
\]
\end{corollary}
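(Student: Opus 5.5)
By \Cref{thm:cartier-augmentation}, the quotient $\cL_{d_r}/[\cL_2,\cL_{d_r-1}]$ is isomorphic to $\Ld_{d_r}/\Ex_{d_r}$, so its dimension can be computed on the vector-field side. There are two equivalent routes.

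The first route uses the monomial basis. Contraction with $\vol$ identifies $\Ld_{d_r}/\Ex_{d_r}$ with $H^{n-1}_{\dR}(R)_{d_r}$ up to the one-dimensional determinant twist, which does not affect dimension. By \eqref{eq:de-rham-basis} and the discussion following \eqref{eq:de-rham-support}, a basis of $H^{n-1}_{\dR}(R)_{d_r}$ consists of the classes
\[
 x^{p\alpha+(p-1)(\one-e_j)}\,dx_1\wedge\cdots\widehat{dx_j}\cdots\wedge dx_n,
 \qquad |\alpha|=r,\ 1\le j\le n.
\]
These are indexed by pairs $(\alpha,j)$, where $\alpha$ is a multi-index of total degree $r$ in $n$ variables and $j$ is a coordinate index. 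We must check that distinct pairs give distinct monomial forms. The omitted differential $dx_j$ determines $j$. Once $j$ is fixed, the exponent vector determines $\alpha$, since $p\alpha = \text{exponent} - (p-1)(\one-e_j)$. The count is then $n$ times the number of degree-$r$ monomials in $n$ variables, which is $n\binom{n+r-1}{r}$.

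The second route is a cross-check using \Cref{prop:rational-module}. There the quotient is $\delta_1\otimes W_r^{(1)}$. The factor $\delta_1$ is one-dimensional, and Frobenius twist preserves dimension. Hence the dimension equals $\dim W_r=\dim\Sym^r(V^*)\cdot\dim V=\binom{n+r-1}{r}\cdot n$.

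No step here is a real obstacle. The only point needing care is the well-definedness of the first count: that the listed classes are linearly independent, and not merely spanning. This follows because \eqref{eq:de-rham-basis} comes from a direct-sum splitting of the tensor-product complex into the cohomology summand and contractible summands. The second route avoids this issue entirely, so I would present it as the main argument and keep the first as a remark.
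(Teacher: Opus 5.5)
Your second route is exactly the paper's proof: the quotient is $\delta_1\otimes W_r^{(1)}$, the determinant line and the Frobenius twist do not change dimension, and $\dim W_r=n\binom{n+r-1}{r}$. The monomial count is a valid cross-check. Note, though, that the second route does not really avoid the linear-independence question: the inverse Cartier isomorphism behind \Cref{prop:rational-module} is itself proved by sending monomials to that same basis of \eqref{eq:de-rham-basis}.
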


\begin{proof}
The determinant line and Frobenius twist do not alter dimension, and
\[
 \dim\Sym^r(V^*)=\binom{n+r-1}{r}.
\]
\end{proof}

\begin{corollary}[Stability under truncation]
For every $r\ge0$ and $N>d_r$, the degree-$d_r$ indecomposable quotient is
nonzero and unchanged under further truncation.
\end{corollary}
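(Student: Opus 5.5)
The plan is to reduce both assertions to statements about the fixed graded pieces $\Ld_{d_r}$ and $\Ex_{d_r}$, which do not depend on $N$. Fix $r\ge0$ and $N>d_r$. By \Cref{thm:cartier-augmentation}, applied with $d=d_r<N$, the degree-$d_r$ indecomposable quotient $\cL^N_{d_r}/[\cL^N_2,\cL^N_{d_r-1}]$ is isomorphic to $\Ld_{d_r}/\Ex_{d_r}$. By \Cref{prop:rational-module} this is in turn isomorphic to $\delta_1\otimes W_r^{(1)}$, and the dimension corollary gives $n\binom{n+r-1}{r}\ge n\ge 1$. So the quotient is nonzero for every admissible $N$.

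For stability, I take $N'\ge N>d_r$ and construct a comparison map from level $N'$ to level $N$. Truncation gives a surjection $T_{N'}\to T_N$ sending $G_s/G_{N'}$ onto $G_s/G_N$. \Cref{thm:augmentation-realization} says that $\pi_*(\cI^{N'}_a)\subseteq\cI^N_a$, so $\pi_*$ induces maps $\cI^{N'}_{s-1}/\cI^{N'}_s\to\cI^N_{s-1}/\cI^N_s$. These maps send the symbol $(g-1)+\cI^{N'}_s$ to $(\bar g-1)+\cI^N_s$. Since $\pi_*$ is a ring homomorphism, the induced maps $\cL^{N'}_s\to\cL^N_s$ respect commutators, and therefore they carry $[\cL^{N'}_2,\cL^{N'}_{d_r-1}]$ into $[\cL^N_2,\cL^N_{d_r-1}]$. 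This yields a map between the two indecomposable quotients.

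The key check is that the map $\cL^{N'}_s\to\cL^N_s$ for $s<N$ is compatible with the leading-field isomorphisms $\cL_s\cong G_s/G_{s+1}\cong\Ld_s$ of \eqref{eq:graded-lie-isomorphism}. The leading field of $g\in G_s$ is read from $g^*(x_i)$ modulo $\m^{s+1}$. For $s<N$, truncation modulo $\m^N$ does not affect this, so the triangle formed by the two leading-field isomorphisms and the truncation map commutes. Hence $\cL^{N'}_s\to\cL^N_s$ is an isomorphism, namely the identity on $\Ld_s$. Under these identifications both quotients equal $\Ld_{d_r}/\Ex_{d_r}$ and the comparison map is the identity, which shows the quotient is unchanged under further truncation. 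The $\mathrm{GL}(V)$-structures also agree, since both are transported through the same $\Ld_{d_r}$.

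I expect the only delicate point to be the well-definedness of the comparison at the level of symbols: $g\in G_s/G_{N'}$ must map into $G_s/G_N$, and the identification of $\cL_s$ with $G_s/G_{s+1}$ must be natural in $N$. Both follow from \Cref{lem:additive-symbol} together with the fact that $G_s/G_{s+1}$ is defined inside $G$ itself and does not depend on the truncation level once $s<N$.
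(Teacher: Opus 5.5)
Your proposal is correct and follows essentially the paper's argument: for $s\in\{2,d_r-1,d_r\}$ you identify $\cL_s^{N}$ and $\cL_s^{N'}$ with the $N$-independent group $G_s/G_{s+1}$, check that commutators are preserved, and obtain nonvanishing from \Cref{prop:rational-module}. The one difference is cosmetic: you realize the identification explicitly through $\pi_*$ and get commutator compatibility from $\pi_*$ being a ring homomorphism with $\pi_*(\cI_a^{N'})\subseteq\cI_a^N$, whereas the paper invokes \Cref{lem:bracket-symbol} at both truncation levels.
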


\begin{proof}
Let $N'\ge N>d_r$.  For
$s\in\{2,d_r-1,d_r\}$, truncation identifies the group-symbol space with
the quotient independent of $N$
\[
 \cL_s^N\cong G_s/G_{s+1}\cong\cL_s^{N'}.
\]
These identifications preserve the commutator by
\Cref{lem:bracket-symbol}.  They therefore identify
\[
 [\cL_2^N,\cL_{d_r-1}^N]
 \quad\text{with}\quad
 [\cL_2^{N'},\cL_{d_r-1}^{N'}]
\]
and induce an isomorphism of the two degree-$d_r$ indecomposable quotients.
Nonvanishing follows from \Cref{prop:rational-module}.
\end{proof}

\end{document}